\documentclass[11pt]{amsart}
\usepackage{amsthm,amssymb, amsmath}
\usepackage{graphicx}
\usepackage[all]{xy}

\newtheorem{theorem}{Theorem}

\newtheorem{lemma}[theorem]{Lemma}
\newtheorem{proposition}[theorem]{Proposition}

\newtheorem{definition}[theorem]{Definition}
\newtheorem{remark}[theorem]{Remark}

\title[The local H\"older exponent]{The local H\"older exponent for the dimension of invariant subsets of the circle}

\author{Carlo Carminati}
\address{Dipartimento di Matematica, Universit\`a di Pisa, 
Largo Pontecorvo 5, 56127 Pisa, Italy}
\email{carminat@dm.unipi.it}

\author{Giulio Tiozzo}
\address{Yale University, 
10 Hillhouse Avenue, New Haven CT 06511, USA}
\email{giulio.tiozzo@yale.edu}


\begin{document}
 \maketitle

\begin{abstract}
We consider for each $t$ the set $K(t)$ of points of the circle
whose forward
orbit for the doubling map does not intersect $(0,t)$, 
and look at the dimension function $\eta(t) :=  \textup{H.dim }K(t)$.
 We prove that at every bifurcation parameter $t$, 
the local H\"older exponent of the dimension function 
equals the value of the function $\eta(t)$ itself. The same statement holds 
by replacing the doubling map with the map $g(x) := dx \mod 1$
for $d >2$.
\end{abstract}

\medskip

The theory of open dynamical systems, also known as dynamical systems ``with holes", 
was developed to model physical phenomena with escape of mass.
One of the simplest models which can be analyzed rigorously is the case 
of expanding maps of the circle $\mathbb{S} := \mathbb{R}/\mathbb{Z}$ where
the hole is an interval with zero on its boundary. 


More precisely, we shall fix an integer $d \geq 2$ once and for all, and consider $g(x) := dx \mod 1$ the map given by multiplication by $d$ on the circle $\mathbb{S}$. 
 For each $t \in [0, 1]$, let us define the set 
$$K(t) := \{ x \in \mathbb{R}/\mathbb{Z} \ : \ g^k(x) \notin (0, t) \quad \forall k \in \mathbb{N} \}$$
of elements whose forward orbit under $g$ does not intersect the interval $(0, t)$.
For each $t$, the set $K(t)$ is compact and forward-invariant for $g$.
One can see immediately that $K(0) = \mathbb{S}$ and $K(1) = \{0 \}$; moreover, $K(t)$ is a 
decreasing family of sets, in the sense that $s < t$ implies $K(s) \supseteq K(t)$.

We shall consider the \emph{dimension function} 
$$\eta(t) := \textup{H.dim }K(t)$$
which gives the Hausdorff dimension of the set $K(t)$ as a function of the parameter $t$.

The function $\eta(t)$ was introduced by Urba{\'n}ski \cite{Urb}, 
who proved that it is continuous, but not globally analytic. In fact, 
he showed that the dimension function is a ``devil's staircase", i.e. it is locally constant 
almost everywhere (see Figure \ref{F:HDU}).


In order to describe the finer analytical properties of $\eta(t)$, we shall call \emph{stable set} the set of parameters $t$ for which the
set-valued function 
$t \mapsto K(t)$ is locally constant at $t$, and the complement 
of the stable set will be called \emph{bifurcation set} and denoted by $\mathcal{U}$.
Clearly, the dimension function is locally constant on the stable set. 

On the other hand, we shall prove that on the bifurcation set 
the dimension function $\eta(t)$ has the following strong self-parametrizing property: 
$$\begin{array}{l}
\emph{at every bifurcation parameter, the local H\"older exponent of the dimension} \\
\emph{function equals the value of the function itself.}
\end{array}$$

To state the result precisely, let us define the \emph{local H\"older exponent} of a function $f : I \to \mathbb{R}$ at a point $x \in I$ as the limit
$$\alpha(f,x) := \liminf_{x' \to x}\frac{\log|f(x)-f(x')|}{\log|x-x'|}.$$
The main theorem is the following.

\begin{theorem} \label{T:local_h}
Let $d \geq 2$. Then, for each parameter $t$ in the bifurcation set, the local H\"older exponent of $\eta$ at $t$ equals $\eta(t)$, 
i.e.
\begin{equation} \label{E:localH} 
\alpha(\eta, t) = \eta(t).
\end{equation}
\end{theorem}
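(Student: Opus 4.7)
The plan is to establish the identity $\alpha(\eta,t)=\eta(t)$ by verifying the two inequalities separately. Unpacking the liminf definition, the inequality $\alpha(\eta,t)\geq\eta(t)$ amounts to a Hölder-type regularity statement, namely $|\eta(t)-\eta(t')|\leq |t-t'|^{\eta(t)-\epsilon}$ for every $\epsilon>0$ and every $t'$ close enough to $t$. The reverse inequality $\alpha(\eta,t)\leq\eta(t)$ requires exhibiting, for each $\epsilon>0$, a sequence $t_n\to t$ with $|\eta(t)-\eta(t_n)|\geq|t-t_n|^{\eta(t)+\epsilon}$. A crucial simplification is that $|g'|\equiv d$, so Bowen's pressure equation collapses to $\eta(t)=h_{\mathrm{top}}(g|_{K(t)})/\log d$; the problem thus reduces to controlling how the topological entropy of the subshift $K(t)$ depends on $t$.

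For the Hölder upper bound I would take $t'>t$ (the opposite case is symmetric), noting $K(t')\subseteq K(t)$, with $K(t)\setminus K(t')$ consisting of those $g$-orbits in $K(t)$ which eventually enter the window $(t,t')$. A Keller--Liverani type escape-rate estimate then produces a bound of the form
\[
h_{\mathrm{top}}(g|_{K(t)})-h_{\mathrm{top}}(g|_{K(t')})\leq C\,\mu_t\bigl((t,t')\bigr),
\]
where $\mu_t$ denotes the measure of maximal entropy on $K(t)$, which since the potential $-s\log d$ is constant also realizes the Hausdorff dimension $\eta(t)$. A cylinder-counting argument, exploiting that $g$ is a linear expander and that $\mu_t$ assigns mass comparable to $d^{-n\eta(t)}$ to each admissible cylinder of length $n$, then yields $\mu_t\bigl([t,t+\delta]\bigr)\leq C_\epsilon\,\delta^{\eta(t)-\epsilon}$; combined with the escape-rate inequality this gives the Hölder upper bound.

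For the matching lower bound I would approach $t$ through a sequence $t_n$ of nearby bifurcation parameters whose $d$-adic expansions share with that of $t$ an initial block of length $N_n\to\infty$, so that $|t-t_n|\asymp d^{-N_n}$, and then diverge along a prescribed tail. Using the symbolic description of $K(s)$ as the set of sequences whose every shift is lexicographically $\geq$ the expansion of $s$, the symmetric difference $K(t)\triangle K(t_n)$ admits a combinatorial description whose word complexity at length $N_n$ can be compared to the count $\asymp d^{N_n\eta(t)}$ for $K(t)$ itself; passing back through Bowen's formula yields the desired lower bound $|\eta(t)-\eta(t_n)|\gtrsim d^{-N_n(\eta(t)+\epsilon)}$. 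The most delicate ingredient is the Frostman-type estimate at the specific boundary point $t$: because bifurcation parameters tend to be preperiodic for $g$, the measure $\mu_t$ need not be strictly Ahlfors regular at $t$, so I expect to absorb this degeneracy into the $\epsilon$-loss by working with Markov partitions adapted to the $g$-orbit of $t$.
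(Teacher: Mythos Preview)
Your strategy is genuinely different from the paper's and considerably heavier. The paper never touches thermodynamic formalism, Keller--Liverani perturbation theory, or Frostman estimates. Instead it works directly with Urba\'nski's implicit formula $P_t(\lambda(t))=1$, where $P_t(X)=\sum_{k\ge 1}(d-1-\epsilon_k(t))X^k$. Writing $m=m(t_1,t_2)$ for the length of the common base-$d$ prefix, the mean value theorem plus termwise subtraction of the two power series gives
\[
|\lambda(t_1)-\lambda(t_2)| \;=\; \lambda(t_1)^{\,m}\,\frac{|R(t_1,t_2)|}{P_{t_2}'(\xi)},
\]
with $R$ a power series in $\lambda(t_1)$ whose coefficients are digit differences. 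The crude bounds $1\le P'_t(x)\le d/(1-x)^2$ and $|R|\le d/(1-\lambda_1)$ already yield the upper bound $|\lambda_1-\lambda_2|\lesssim |t_1-t_2|^{\eta(t_1)}$, with \emph{no} $\epsilon$-loss; for the lower bound one constructs a specific approximating sequence $t_n\to t$ whose digits satisfy $\epsilon_k(t_n)\ge \epsilon_k(t)$ for all $k$ (or the reverse), so that $R$ has integer coefficients all of one sign and hence $|R|\ge 1$. This one-page argument replaces all of your measure-theoretic machinery.

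Your upper bound outline is plausible in spirit, and you correctly flag the delicate point: the Frostman inequality $\mu_t([t,t+\delta])\le C_\epsilon\delta^{\eta(t)-\epsilon}$ must hold at the boundary point $t$ itself, not merely $\mu_t$-a.e. It is not obvious that ``Markov partitions adapted to the orbit of $t$'' resolve this without essentially redoing the cylinder count that the power-series argument performs directly.

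Your lower bound, however, has a real gap. Knowing the word complexity of $K(t)\triangle K(t_n)$ at the single scale $N_n$ does not give a lower bound on $|h_{\mathrm{top}}(g|_{K(t)})-h_{\mathrm{top}}(g|_{K(t_n)})|$: entropy is a growth rate, and removing a large set of words at one level can still change the growth rate negligibly. What you actually need is a lower bound of the form $\mu_t\bigl((t,t_n)\bigr)\gtrsim |t-t_n|^{\eta(t)+\epsilon}$ together with a converse Keller--Liverani inequality, and neither is supplied. In the paper this is exactly what the monotone-digit approximation (Lemma~\ref{L:approx}) accomplishes: it forces $|R|\ge 1$, which is the substitute for the missing measure lower bound. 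Without an analogous concrete construction, your lower-bound argument does not go through.
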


As a corollary, the dimension function is always locally H\"older continuous except at $t = \frac{d-1}{d}$
(where $\eta(t) = 0$) and becomes more and more regular as $t$ tends to $0$. It is differentiable at $t = 0$.

At $t = \frac{d-1}{d}$, the function is not H\"older continuous, and we shall show that its modulus of continuity is of order $\frac{\log \log (1/x) }{\log (1/x)}$ (see Proposition \ref{P:modulus}).

Moreover, the intervals where the function is constant correspond precisely to the connected components of the stable set, 
and these are characterized in terms of \emph{Lyndon words} (see below).  

\begin{figure}[h]
\fbox{\includegraphics[width=0.8\textwidth]{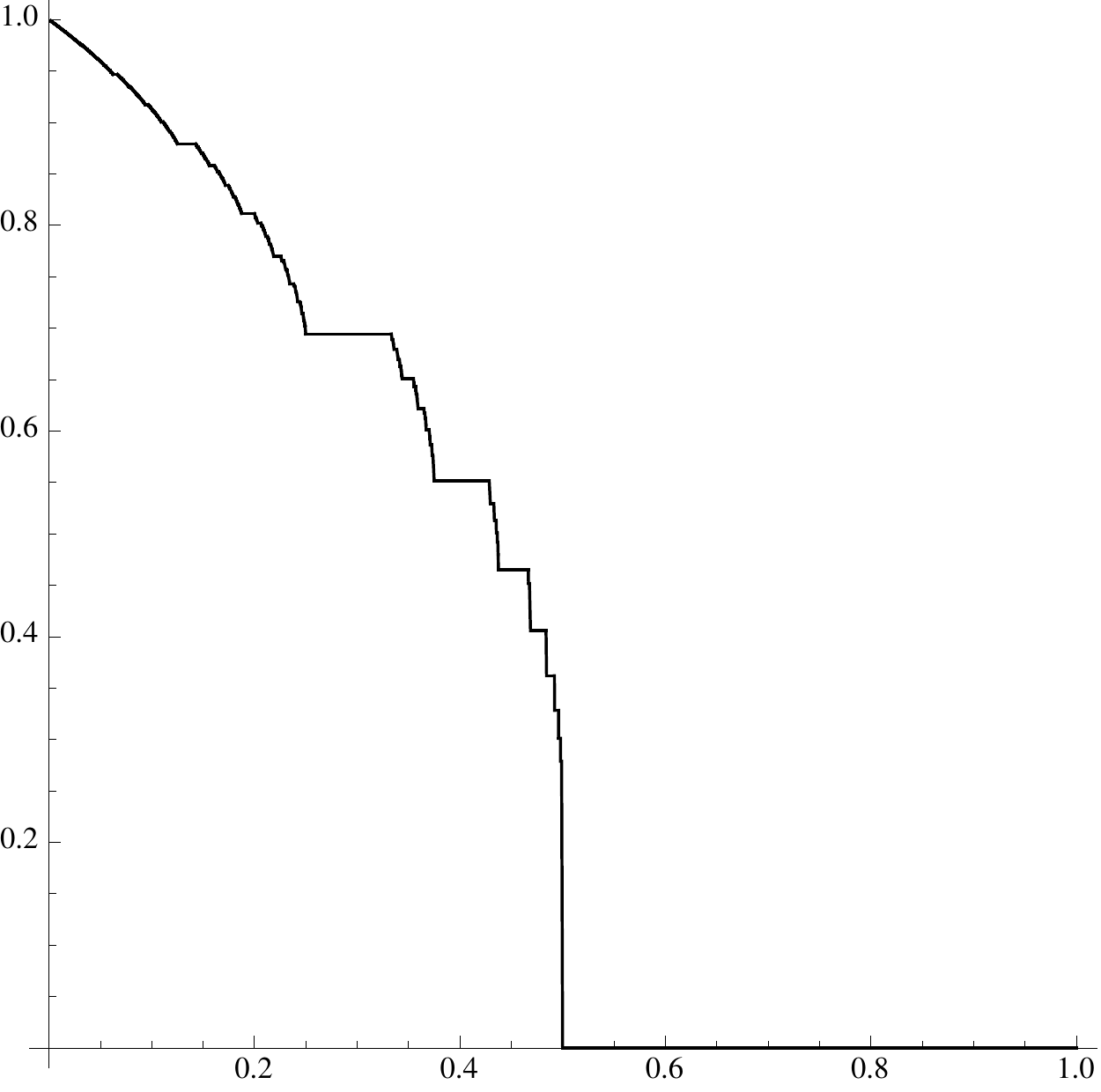}}
\caption{The dimension function $\eta(t)$ for $d = 2$.}
\label{F:HDU}
\end{figure}

\medskip
The dimension function $\eta$ is directly related to other quantities which have been 
widely studied. 
First of all, if we denote $M_n := \{x \in \mathbb{S} \ : \ g^k(x) \notin (0, t) \textup{ for }k = 0, \dots, n-1 \}$ the set 
of points which do not fall into the hole under the first $n$ iterates, one defines the \emph{escape rate} $\gamma$ to be 
$$\gamma := \lim_{n \to \infty} -\frac{1}{n} \log |M_n|.$$
The escape rate is directly related to $\eta$ by the formula 
$$\eta = 1 - \frac{\gamma}{\log d}.$$
In particular, the asymptotic behaviour of $\gamma$ in the ``small hole" case is quite well-understood 
(see among others \cite{kl} and \cite{dett}); this gives an asymptotic expansion of $\eta(t)$ as $t \to 0$. 

Moreover, if we denote by $h(t)$ the topological entropy of the restriction of the map $g$ to $K(t)$, 
one has the relation (see e.g. \cite{You})
$$h(t) = \eta(t) \cdot \log d.$$

The set $K(t)$ is also related to the problem of diophantine approximation (see \cite{Nilsson}): in fact,
if one considers the set 
$$F_t := \{ x \in \mathbb{S} \ : \ x - m/2^n \geq t/2^n \textup{ for all but finitely many }m,n \}$$
of points which are not well-approximable (in a suitable sense) by dyadic numbers, then one has for any $t \in [0,1]$
$$\eta(t) = \textup{H.dim }F_t.$$

\medskip

One important tool for the proof of Theorem \ref{T:local_h} is a formula, due to Urba{\'n}ski (\cite{Urb}, pg. 305) which allows to compute the value of $\eta(t)$ given the expansion in base $d$ of $t\in \mathcal{U}$.
More precisely, let $t \in \mathcal{U}$ be a bifurcation parameter, and let $t = .\epsilon_1\epsilon_2\dots$ its base-$d$ expansion which is not eventually $0$. 
Then $\eta = \eta(t)$ the Hausdorff dimension of $K(t)$ is given by 
\begin{equation}\label{eq:main}
\eta = -\frac{\log \lambda}{\log d}
\end{equation}
 where $\lambda$ is a root of the equation $P_t(\lambda) = 1$,
and $P_t(X)$ is the power series
\begin{equation} \label{E:PtX}
P_t(X) := \sum_{k = 1}^\infty (d-1-\epsilon_k)X^k.
\end{equation}

Let us stress that this formula is only valid for $t\in \mathcal{U}$, see Remark \ref{vedi-dopo}.
Indeed, the other main ingredient in our approach is an explicit characterization of the expansions in base $d$ of the elements of $\mathcal{U}$.

In particular, we will show (Proposition \ref{P:complement}) that the connected components of the complement of $\mathcal{U}$ are naturally labelled by \emph{Lyndon words}, i.e. finite words which are minimal for the lexicographic order among their cyclic permutations.
This also answers the question of Nilsson (\cite{Nilsson}, Sec. 6), who asks for a characterization of the plateaux of the dimension function for $d >2$; 
in the case $d =2$, our characterization is essentially equivalent to that given in \cite{Nilsson}.

In fact, using this description of $\mathcal{U}$ we will recover in a self-contained, elementary way the main results of \cite{Urb}, using combinatorics on words rather than thermodynamic formalism (see Remark \ref{final}). 

We are also aware of related work in progress by O. Bandtlow and H.H. Rugh, using a different approach based 
on thermodynamic formalism.
 
\medskip

Note that, without any reference to dynamics, one can ask whether there exist
monotone, continuous functions $f : [0,1] \to [0,1]$ with the 
property that, for each $t \in [0,1]$ either $f$ is locally constant at $t$, or the local H\"older exponent of $f$ at $t$ equals $f(t)$.
The functions $\eta(t)$ for varying $d$ provide infinitely many such examples (hence this property does not determine the function uniquely).

Moreover, functions with this property seem to appear also in relation to other families of dynamical systems. 

In particular, if one considers the function $h(\theta) := \frac{h_{top}(f_\theta)}{\log 2}$ which expresses the (normalized) topological 
entropy of a real quadratic polynomial $f_\theta$ as a function of its external angle $\theta$
(or equivalently, as a function of its kneading sequence) then it is also expected
that the local H\"older exponent of $h(t)$ at $t$ equals the value of the function $h(t)$ (see e.g. \cite{Gu} and \cite{IP}, Sec. 4).
Note that in this case the \emph{kneading series} of \cite{MT} plays the same role as the power series $P_t(X)$ in 
this paper.  
However, in this case the modulus of continuity at the smallest $t$ where $h(t) = 0$ (the Feigenbaum parameter) is of order $\frac{1}{\log(1/x)}$
(see e.g. \cite{Ti14}, Section 9.1).

Another more complicated, non-monotone case where the local H\"older exponent 
is at least conjectured to equal the value of the function at every point is given by
the (normalized) \emph{core entropy} function for quadratic polynomials introduced by W. Thurston (see e.g. \cite{Ti15}).

The underlying phenomenon, of which Theorem \ref{T:local_h} provides
a quantitative statement in a specific case, seems to be that systems with low entropy are less stable than systems with 
high entropy, in the sense that a small perturbation leads to a large variation in entropy. 
It would be of great interest to investigate to what extent is this phenomenon universal.

\section*{Word combinatorics and ordering}

Let $d \geq 2$ be fixed once and for all. We define the \emph{alphabet} as the set $\mathcal{A} := \{0, 1, \dots, d-1\}$, and (finite or infinite) sequences 
of elements of $\mathcal{A}$ will be called \emph{words}. If $S, T \in \mathcal{A}^n$ are finite words of equal length, 
we write $S < T$ to denote the lexicographical order; moreover, we shall 
extend the order to a partial order on the set of all finite words in the following way.
If $S = (a_1, \dots, a_n)$ and $T = (b_1, \dots, b_m)$ are finite words, we write $S \ll T$ 
(and read $S$ is \emph{strongly less} than $T$) if 
there exists an index $k \leq \min \{m, n\}$ such that 
$a_i = b_i$ for all $1 \leq i \leq k-1$ and $a_k < b_k$. 
For instance, $001$ is strongly less than $01$ but not strongly less than $00101$.

\begin{definition}
Let us define a finite word $S$ to be \emph{Lyndon} if it is strongly less than all its 
proper suffixes; that is, if for each decomposition $S = XY$
in two non-empty words one has 
$$S \ll Y.$$
\end{definition}

For instance, $011$ is Lyndon because $011 \ll 11$ and $011 \ll 1$, but $01101$ is not Lyndon, 
because $01$ is a suffix of $01101$ but $01101$ is not strongly less than $01$.

\begin{definition}
A rational number $r \in (0, 1)$ is called \emph{Lyndon} (for a given base $d$) if it admits a finite expansion $r = .\epsilon_1\dots\epsilon_m$ in base $d$
such that the word $S = \epsilon_1\dots\epsilon_m$ is Lyndon (note that such expansion is unique, because the Lyndon property implies $\epsilon_m \neq 0$).
\end{definition}

We shall denote $\mathbb{Q}_{(d)}$ the set of $d$-rational numbers contained in the interval $(0, 1)$, 
and $\mathbb{Q}_{Lyn}$ the set of Lyndon rational numbers in $(0, 1)$.

Finally, if $S = \epsilon_1\dots\epsilon_m$ is a finite word and $x \in [0, 1]$, we shall denote by 
$$S \cdot x := \sum_{i = 1}^m {\epsilon_i}d^{-i} + xd^{-m}$$
 the number whose expansion in base $d$ is $S$ followed by the expansion of $x$. The 
 affine map $x \mapsto S \cdot x$
 is an inverse to $g^m$ and is uniformly contracting with derivative $d^{-m}$.

Lyndon words appear in several contexts in combinatorics: for a reference, see e.g. \cite{Lot}, pg. 64.
Another equivalent definition given in the literature is that a word $S$ is Lyndon if it is the smallest among all its 
cyclic permutations, i.e. if one has 
$$S < YX$$
whenever $S = XY$ is a decomposition of $S$ in two non-empty words (for the equivalence, cf. Lemma \ref{L:dom}).

\section*{The bifurcation set}

Let us start by considering the function $t \mapsto K(t)$ as a function into sets.
We shall call a parameter $t \in [0, 1]$ \emph{stable} 
if the function $t \mapsto K(t)$ is locally constant at $t$: that is, 
if there exists $\epsilon > 0$ such that the equality
$$K(t') = K(t)$$
holds for each $t' \in [t-\epsilon, t + \epsilon]$.
We call such a set of parameters the \emph{stable set}.
A parameter which is not stable will be called a \emph{bifurcation parameter}, and 
the set of all bifurcation parameters will be called \emph{bifurcation set}
and denoted by $\mathcal{U}$.

The set $\mathcal{U}$ is closed with no interior, and has the following characterization:
$$\mathcal{U} = \{ t \in [0,1] \ : \ g^k(t) \notin (0,t) \ \forall k \geq 0 \}$$
(for a proof, see Lemma \ref{L:bif}.)
The main goal of this section is to characterize all connected components of the complement of $\mathcal{U}$; we shall 
see that they are naturally labeled by Lyndon rational numbers.

Let us define for each $d$-rational $r \in \mathbb{Q}_{(d)}$ the interval 
$$I_r := (.\epsilon_1\dots\epsilon_m, .\overline{\epsilon_1\dots\epsilon_m})$$
with left endpoint $r$ and right endpoint the rational number with periodic base-$d$ expansion $\epsilon_1\dots \epsilon_m$.
For instance, in the case $d =2$, if $r = 1/4= .01$, then $.\overline{01} = 1/3$ hence $I_{1/4} = (1/4, 1/3)$. Note also  $I_{1/2} = (1/2, 1)$.

\begin{proposition} \label{P:complement}
The connected components of the complement of $\mathcal{U}$ are parametrized 
by Lyndon rational numbers. Indeed, we have the identities
$$[0, 1] \setminus \mathcal{U} = \bigsqcup_{r \in \mathbb{Q}_{Lyn}} I_r = \bigcup_{r \in \mathbb{Q}_{(d)}} I_r.$$
\end{proposition}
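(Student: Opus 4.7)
The plan is to prove the two identities in tandem, using the dynamical characterization $\mathcal{U} = \{t : g^k(t) \notin (0,t) \ \forall k \geq 0\}$ from Lemma \ref{L:bif}. For any $d$-rational $r = .\epsilon_1\dots\epsilon_m$ one has $r^* - r = r/(d^m-1) \leq d^{-m}$, where $r^* := .\overline{\epsilon_1\dots\epsilon_m}$, so every $t \in I_r$ has its first $m$ base-$d$ digits equal to $\epsilon_1\dots\epsilon_m$; hence $g^m(t) = d^m(t - r)$, and the upper-endpoint condition $t < r^*$ rearranges exactly to $d^m(t-r) < t$. This gives $g^m(t) \in (0, t)$, so $t \notin \mathcal{U}$ and $\bigcup_{r \in \mathbb{Q}_{(d)}} I_r \subseteq [0,1]\setminus \mathcal{U}$. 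Conversely, given $t \notin \mathcal{U}$, pick $k \geq 1$ with $g^k(t) \in (0, t)$ and let $r := .\epsilon_1\dots\epsilon_k$ be the $d$-rational formed by the first $k$ digits of $t$ (which must not be identically zero, else $g^k(t) > t$); the identity $g^k(t) = d^k(t - r)$ turns $0 < g^k(t) < t$ into $r < t < r^*$, so $t \in I_r$ and $[0,1]\setminus \mathcal{U} = \bigcup_{r \in \mathbb{Q}_{(d)}} I_r$.

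Next I would show that for every Lyndon rational $r$ with expansion $R = \epsilon_1\dots\epsilon_m$, both endpoints $r$ and $r^*$ lie in $\mathcal{U}$. For $k \in \{1,\dots,m-1\}$ the expansion of $g^k(r)$ begins with $\epsilon_{k+1}\dots\epsilon_m$ followed by zeros, and the Lyndon inequality $R \ll \epsilon_{k+1}\dots\epsilon_m$ places the first position of disagreement with $r$ inside the first $m-k$ digits and makes it strictly larger, so $g^k(r) > r$; for $k \geq m$ one has $g^k(r) = 0 \notin (0,r)$. For $r^*$, the orbit cycles through the periodic numbers $.\overline{YX}$ for each factorization $R = XY$, and the comparison of such periodic numbers reduces to the lexicographic comparison of the underlying periods; the cyclic-minimum characterization of Lyndon words (Lemma \ref{L:dom}) then yields $g^k(r^*) > r^*$ for $1 \leq k \leq m-1$. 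Since $r, r^* \in \mathcal{U}$ while the open interval $I_r$ lies in the complement, $I_r$ is a full connected component of $[0,1]\setminus \mathcal{U}$, and distinct Lyndon intervals are automatically pairwise disjoint.

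To finish, I would show that every connected component of $[0,1]\setminus \mathcal{U}$ is of the form $I_r$ with $r \in \mathbb{Q}_{Lyn}$. Pick an irrational $t$ in a given component (possible by density), let $k \geq 1$ be the minimal index with $g^k(t) < t$, and set $r := .\epsilon_1\dots\epsilon_k$. By the first paragraph $t \in I_r$, and once $R = \epsilon_1\dots\epsilon_k$ is shown to be Lyndon, the previous paragraph identifies $I_r$ with the component itself. This Lyndon property is the combinatorial heart of the proof. Given any non-trivial factorization $R = XY$ with $|X| = j$, minimality of $k$ forces $g^j(t) > t$ (strictly, since $t$ is irrational); let $l$ be the first index at which the base-$d$ expansions of $g^j(t)$ and $t$ disagree. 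The main obstacle is ruling out the case $l > k - j$: in that regime the first $k-j$ digits of $g^j(t)$ coincide with those of $t$, and a direct subtraction yields
\begin{equation*}
g^j(t) - t = d^{-(k-j)}\bigl(g^k(t) - g^{k-j}(t)\bigr).
\end{equation*}
By the minimality of $k$ we have $g^{k-j}(t) \geq t > g^k(t)$, so the right-hand side is negative, contradicting $g^j(t) > t$. Hence $l \leq k - j$, which is exactly the condition $R \ll Y$, and $R$ is Lyndon.
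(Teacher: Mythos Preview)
Your argument is correct and follows the same overall architecture as the paper: first establish $[0,1]\setminus\mathcal{U}=\bigcup_{r\in\mathbb{Q}_{(d)}}I_r$ via the expanding fixed point $\overline r$, then show Lyndon intervals have both endpoints in $\mathcal{U}$ (hence are full components), and finally show that the interval produced by the \emph{minimal} return index $k$ is Lyndon. The one place where your route genuinely diverges is this last step. The paper separates it into two pieces: Lemma~\ref{L:com2} proves dynamically, via an intermediate value argument on the cylinder $J_k=S_k\cdot(0,1]$, that $g^h(J_k)\subseteq(\overline r,1)$ for $0<h<k$, whence $\partial I_r\subseteq\mathcal{U}$; Lemma~\ref{L:dom} then translates $\partial I_r\subseteq\mathcal{U}$ into the Lyndon property of the word. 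You instead bypass both lemmas with a single combinatorial shortcut: picking an irrational $t$ to force strict inequalities, and using the identity $g^j(t)-t=d^{-(k-j)}\bigl(g^k(t)-g^{k-j}(t)\bigr)$ (valid because the agreement of the first $k-j$ digits places $t$ and $g^j(t)$ in the same level-$(k-j)$ cylinder) together with minimality of $k$ to rule out the case $l>k-j$. This is a clean alternative; it avoids the IVT detour and yields $\epsilon_k\neq 0$ for free (take $j=k-1$). The paper's decomposition, on the other hand, isolates Lemma~\ref{L:dom} as a reusable dictionary between the dynamical condition $\partial I_r\subseteq\mathcal{U}$ and the combinatorial Lyndon condition, which is convenient elsewhere in the paper. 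One small remark: your citation of ``Lemma~\ref{L:dom}'' for the cyclic-minimum characterization is slightly off, since in the paper that lemma is the equivalence $\partial I_r\subseteq\mathcal{U}\Leftrightarrow S$ Lyndon; but you supply the needed comparison of periodic expansions yourself, so nothing is missing.
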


The proposition will follow from the following lemmata.

\begin{lemma}\label{L:bif}
Let $t \in [0, 1)$. Then the following are equivalent:
\begin{enumerate}
\item
the element $t$ belongs to $K(t)$;
\item
$t$ is a bifurcation parameter.
\end{enumerate}
\end{lemma}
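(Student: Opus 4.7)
The plan is to prove the two implications separately, building on two elementary facts: (i) for every parameter $s \in [0,1]$ the set $K(s)$ is contained in $\{0\} \cup [s, 1)$, since any $x \in K(s)$ must itself lie outside $(0, s)$ (the case $k=0$ in the definition of $K(s)$); (ii) $K(s)$ is forward invariant under $g$, and the family $s \mapsto K(s)$ is decreasing in $s$.

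For $(1) \Rightarrow (2)$, assume $t \in K(t)$. If $t > 0$, then fact (i) gives $t \notin K(s)$ for every $s > t$, so $K(s) \neq K(t)$ on any right neighborhood of $t$, and $t$ is a bifurcation parameter. The degenerate case $t = 0$ must be handled separately: here $K(0) = \mathbb{S}$, while $K(s)$ omits the whole interval $(0, s)$ for every $s > 0$, so again $K$ fails to be locally constant at $0$.

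For the harder implication $(2) \Rightarrow (1)$ I would argue by contrapositive. Suppose $t \notin K(t)$; then some $k \geq 1$ satisfies $y := g^k(t) \in (0, t)$. Since $g^k : \mathbb{S} \to \mathbb{S}$ is continuous and $0 < y < t$, one can pick $\epsilon > 0$ so small that
$$g^k\bigl([t-\epsilon, t+\epsilon]\bigr) \subseteq (0, t-\epsilon).$$
I then claim $K(s)$ is constant on $[t-\epsilon, t+\epsilon]$, which would show $t$ is stable. By monotonicity it suffices to check $K(t-\epsilon) \subseteq K(t+\epsilon)$. If instead some $x \in K(t-\epsilon) \setminus K(t+\epsilon)$ existed, then some iterate $y' = g^j(x)$ would land in $(0, t+\epsilon) \setminus (0, t-\epsilon) = [t-\epsilon, t+\epsilon)$; by forward invariance $y' \in K(t-\epsilon)$, but by our choice of $\epsilon$ the iterate $g^k(y') \in (0, t-\epsilon)$ would contradict $y' \in K(t-\epsilon)$.

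The only technical point, and the main obstacle if there is one, is the choice of $\epsilon$ in the second implication: one must combine continuity of $g^k$ on the circle near $t$ with the strict inequalities $0 < y < t$, which provide enough room simultaneously to push the image below $t-\epsilon$ and to keep it above $0$. Everything else is a direct unpacking of the definitions of $K(t)$ and of the stable set.
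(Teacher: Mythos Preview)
Your proof is correct. The forward implication $(1)\Rightarrow(2)$ matches the paper's argument (and in fact you are more careful: the paper's one-line proof that $t\in K(t)\setminus K(t')$ for $t'>t$ tacitly assumes $t>0$, while you handle $t=0$ separately).

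For the contrapositive $(\neg 1)\Rightarrow(\neg 2)$ you take a genuinely different route. The paper treats the two sides asymmetrically: on the right it sets $t' := \inf\{x\in[t,1]: x\in K(t)\}>t$ and uses forward invariance to conclude $K(t')=K(t)$; on the left it argues by contradiction with a sequence $t_n\uparrow t$, extracting iterates $y_n\in K(t_n)\cap[t_n,t]$ and passing to the limit via continuity of $g$ on $\mathbb{S}$. Your argument instead exploits directly that the condition ``$g^k(t)\in(0,t)$'' is open: a single well-chosen $\epsilon$ with $g^k([t-\epsilon,t+\epsilon])\subseteq(0,t-\epsilon)$ forces $K(t-\epsilon)=K(t+\epsilon)$ in one stroke. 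This is cleaner and more symmetric; the paper's right-hand argument, on the other hand, yields the slightly sharper information that $K$ is actually constant all the way up to $t'=\min(K(t)\cap[t,1])$, which is sometimes useful elsewhere.
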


\begin{proof}[Proof of Lemma \ref{L:bif}]
If $t \in K(t)$, then for each $t' > t$ the element $t$ belongs to $K(t) \setminus K(t')$, proving (2).

If instead $t \notin K(t)$, then let $t' := \inf \{ x \in [t, 1] \ : \ x \in K(t) \} > t$ (the inequality is strict since $K(t)$ is closed).
We claim that $K(t') = K(t)$; indeed, if $x \in K(t)$ then for each $k$ we have $g^k(x) \in K(t) \subset [t', 1]$, 
hence $x \in K(t')$. Moreover, let us show that there exists $\epsilon > 0$ such that for $t' \in (t-\epsilon, t)$ we have 
$K(t') = K(t)$. If not, then there is a sequence of parameters $t_n \to t$, $t_n < t$ and a sequence of elements $x_n \in K(t_n) \setminus K(t)$.
By taking forward images of $x_n$, we then get a sequence $y_n = g^{k_n}(x_n) \in K(t_n) \cap [t_n, t]$: this implies that $y_n \to t$ and 
$$g^k(y_n) \in [t_n, 1]$$
for each $k$ and $n$: thus, since $g$ is continuous on $\mathbb{S}$, we have $g^k(t) \in [t, 1]$, which contradicts the fact that $t \notin K(t)$.
\end{proof}

\begin{lemma} \label{L:com1}
For each $d$-rational $r \in \mathbb{Q}_{(d)}$, the interval $I_r$ is contained in the stable set $[0,1]\setminus \mathcal{U}$.
\end{lemma}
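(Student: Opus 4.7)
The plan is to reduce the assertion to Lemma \ref{L:bif}: since that lemma identifies $\mathcal{U}$ with $\{t : t \in K(t)\}$, showing $I_r \subset [0,1]\setminus \mathcal{U}$ amounts to exhibiting, for each $t \in I_r$, some index $k$ with $g^k(t) \in (0,t)$. The natural candidate is $k=m$, the length of the defining word of $r$.

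Write $q := r = .\epsilon_1\dots\epsilon_m$ and $p := .\overline{\epsilon_1\dots\epsilon_m}$, so that $I_r = (q,p)$. A geometric-series computation identifies $p$ as the unique solution of the fixed-point equation $d^m(p-q) = p$, yielding $p = d^m q/(d^m - 1)$ and therefore $p - q = q/(d^m - 1)$. Since every $d$-ary word of length $m$ satisfies $q \leq 1 - d^{-m}$, we obtain $p - q \leq d^{-m}$; in particular $I_r \subset [q, q + d^{-m}]$.

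On the interval $[q, q + d^{-m}]$ the iterate $g^m$ is a single affine branch of slope $d^m$: for any $t$ there the quantity $d^m(t-q)$ lies in $[0,1]$, so no wrap-around occurs and $g^m(t) = d^m(t-q)$. For $t \in (q,p)$ this immediately gives $g^m(t) > 0$ (because $t > q$), while the inequality $g^m(t) < t$ is equivalent, after clearing denominators, to $t < d^m q/(d^m - 1) = p$, which also holds. Hence $g^m(t) \in (0,t)$, so $t \notin K(t)$, and Lemma \ref{L:bif} places $t$ outside $\mathcal{U}$.

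The argument is essentially mechanical; the only step that warrants attention is the length estimate $|I_r| \leq d^{-m}$, needed to guarantee that $g^m$ remains single-branched throughout $I_r$ and that the strict inequalities $q < t < p$ translate into the strict inequalities $0 < g^m(t) < t$. Both follow directly from the explicit formula for $p$ in terms of $q$.
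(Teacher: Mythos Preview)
Your proof is correct and follows essentially the same approach as the paper: both arguments show that for $t\in I_r$ one has $g^m(t)\in(0,t)$, whence $t\notin\mathcal{U}$ by Lemma~\ref{L:bif}. The paper phrases this dynamically ($g^m$ is expanding with fixed point $\overline{r}$ and maps $(r,\overline{r})$ onto $(0,\overline{r})$, so points move away from $\overline{r}$), while you carry out the equivalent affine computation explicitly; the content is the same.
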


\begin{proof}
Indeed, if $r = .\epsilon_1\dots\epsilon_m$, then the map $g^m$ is uniformly expanding of derivative $d^m$, 
it has $\overline{r} = .\overline{\epsilon_1\dots \epsilon_m}$ as its fixed point and maps $(r, \overline{r})$ onto $(0, \overline{r})$. Thus, if $x \in (r, \overline{r})$, 
then $|g^m(x) - \overline{r}| > |x - \overline{r}|$, hence $g^m(x) \in (0, x)$ and $x \notin \mathcal{U}$.
\end{proof}

\begin{lemma}\label{L:com2}
Let $x \notin \mathcal{U}$. Then $x$ belongs to some interval $I_r$ with $\partial I_r \subseteq \mathcal{U}$.
\end{lemma}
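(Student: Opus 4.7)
The plan is to construct an explicit Lyndon word $S=\epsilon_1\ldots\epsilon_m$ such that $x\in I_r$ for $r=.S$, and then to verify that both endpoints $r,\overline r\in\mathcal U$. To set up the construction, I will let $m\geq 1$ be the minimal integer with $g^m(x)\in(0,x)$, which exists since $x\notin\mathcal U$. Writing $x$ in base $d$ as $x=.\epsilon_1\epsilon_2\ldots$ (taking the terminating expansion if $x$ is $d$-rational, which forces $m$ to be strictly less than its length), I set $S:=\epsilon_1\ldots\epsilon_m$ and $r:=.S$. Since the action of $g$ on base-$d$ digits is the shift, the assumptions ``$g^j(x)\geq x$ for $1\leq j<m$'' and ``$g^m(x)<x$'' translate directly into lexicographic inequalities between the infinite word $\epsilon_1\epsilon_2\ldots$ and its shifts; a short periodicity observation (any $1\leq j<m$ with $g^j(x)=x$ would force $g^m(x)=g^{m-j}(x)\geq x$, contradicting $g^m(x)<x$) upgrades the inequalities for $j<m$ to strict ones.

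The crucial step is to prove that $S$ is Lyndon. For each $1\leq j\leq m-1$ I will consider the first position $k\geq 1$ at which the digits $\epsilon_{j+k}$ and $\epsilon_k$ disagree. If $k\leq m-j$, then both positions lie inside $S$ and the defining condition $S\ll\epsilon_{j+1}\ldots\epsilon_m$ of ``strongly less than'' is satisfied directly. The case $k>m-j$ is the main obstacle: here the word $\epsilon_{j+1}\ldots\epsilon_m$ coincides with the prefix $\epsilon_1\ldots\epsilon_{m-j}$ of $\epsilon$, and after this common prefix the infinite sequences $\epsilon_{j+1}\epsilon_{j+2}\ldots$ and $\epsilon_1\epsilon_2\ldots$ continue respectively as the digit expansions of $g^m(x)$ and $g^{m-j}(x)$; the inequality $g^j(x)>x$ thus becomes $g^m(x)>g^{m-j}(x)$, contradicting the chain $g^m(x)<x<g^{m-j}(x)$ already established. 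Hence $S$ is Lyndon, and in particular $\epsilon_m\geq 1$, so $.S$ is the canonical $d$-ary expansion of $r$.

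Once $S$ is Lyndon, the remaining facts are routine: $x\in I_r=(r,\overline r)$ because $x>r$ (some digit of $x$ past position $m$ is non-zero) and $x<\overline r$ is just a rephrasing of $g^m(x)<x$; for $r\in\mathcal U$, one checks that $g^k(r)=.\epsilon_{k+1}\ldots\epsilon_m$ for $1\leq k<m$, so the Lyndon inequality forces $g^k(r)>r$, while $g^k(r)=0$ for $k\geq m$; finally $\overline r\in\mathcal U$ follows from the equivalent cyclic-permutation characterization of Lyndon words (cf.\ Lemma~\ref{L:dom}), since each non-trivial iterate $g^k(\overline r)$ is the periodic number whose block is a non-trivial cyclic permutation of $S$ and therefore strictly greater than $\overline r$.
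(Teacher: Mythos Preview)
Your proof is correct, but it takes a genuinely different route from the paper's.

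The paper argues dynamically: after picking the minimal $m$ with $g^m(x)\in(0,x)$ and setting $r=.\epsilon_1\ldots\epsilon_m$, it works with the cylinder $J_m=(r,r+d^{-m}]$ on which $g^m$ is the affine expansion with fixed point $\overline r$, and then proves the geometric inclusion $g^h(J_m)\subseteq(\overline r,1)$ for $1\leq h\leq m-1$ by an intermediate value theorem argument. This inclusion immediately gives $r,\overline r\in\mathcal U$ without ever mentioning the Lyndon property; Lemma~\ref{L:dom} is invoked only afterwards, in the assembly of Proposition~\ref{P:complement}.

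You instead go through word combinatorics: you translate the dynamical inequalities $g^j(x)>x$ ($1\leq j<m$) and $g^m(x)<x$ into lexicographic statements and show directly that $S=\epsilon_1\ldots\epsilon_m$ is Lyndon, the key case being when the suffix $\epsilon_{j+1}\ldots\epsilon_m$ is a prefix of $S$, which you rule out via the chain $g^m(x)<x<g^{m-j}(x)$. You then read off $\partial I_r\subseteq\mathcal U$ from the Lyndon property, essentially reproving the direction (2)$\Rightarrow$(1) of Lemma~\ref{L:dom}. What your approach buys is that it identifies the interval $I_r$ as coming from a Lyndon word from the outset, so the link to the parametrization in Proposition~\ref{P:complement} is explicit; the paper's approach is more self-contained at this stage (no forward reference) and highlights the expanding dynamics rather than the combinatorics. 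Note also that your choice of the terminating expansion versus the paper's non-degenerate one is immaterial here, since you correctly observe $m<\ell$ so the first $m$ digits coincide in both.
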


\begin{proof}
Let $x \notin \mathcal{U}$, and  $k \geq 1$ be the minimum value such that 
\begin{equation} \label{E:back}
g^k(x) \in (0, x).
\end{equation}
Let $x = .\epsilon_1\epsilon_2\dots$ be the non-degenerate expansion of $x$, denote $S_k := \epsilon_1\dots\epsilon_k$ 
its truncation and  write $r := S_k \cdot 0 = .\epsilon_1\dots \epsilon_k$. Note that the map $g^k$ is an orientation-preserving 
bijection from $J_k := S_k \cdot (0,1]$ onto $(0, 1]$ with derivative $d^k$, and $\overline{r}:= .\overline{\epsilon_1\dots\epsilon_k}$ is its fixed point. 
Now note that by construction $x$ belongs to $J_k$; moreover,  if $x \geq \overline{r}$, then $g^k(x) = d^k (x- \overline{r}) + \overline{r} \geq x$, contradicting 
eq. \eqref{E:back}. Thus, $x$ belongs to  $I_r := (r, \overline{r})$, proving the first part of the claim.

We claim moreover that for each $h = \{1, \dots, k-1\}$ we have 
\begin{equation}\label{E:incl}
g^h(J_k) \subseteq (\overline{r}, 1).
\end{equation}
which implies that both $r$ and $\overline{r}$ belong to $\mathcal{U}$, thus $\partial I_r \subseteq \mathcal{U}$ as required. To prove eq. \eqref{E:incl}, let us pick $y \in J_k$; 
if $y \geq \overline{r}$, then 
\begin{equation} \label{E:large}
g^h(y) = g^h(x)  + d^h(y-x) > x +  (y-x) = y \geq \overline{r}.
\end{equation}
Now, if there exists $y \in J_k \cap (0, \overline{r})$ 
such that $g^h(y)
< \overline{r}$, then by the intermediate value theorem there must exist 
$z \in J_k \cap (0, \overline{r})$ such that $g^h(z) = \overline{r}$, 
hence $g^k(z) = g^{k-h} 
(\overline{r}) \geq \overline{r}$ by the previous observation (eq. \eqref{E:large} with $k-h$ instead of $h$).
However, this is contradictory because $g^k(z) \in g^k(J_k \cap (0, \overline{r})) = (0, \overline{r})$.
\end{proof}

\begin{lemma} \label{L:dom}
Let $S = \epsilon_1\dots\epsilon_m \in \mathcal{A}^m$ be a word with $\epsilon_m \neq 0$, and $r = .\epsilon_1\dots\epsilon_m$
the associated $d$-rational.
Then the following are equivalent:
\begin{enumerate}
\item
$\partial I_r$ belongs to $\mathcal{U}$; 
\item 
$S$ is a Lyndon word.
\end{enumerate}
\end{lemma}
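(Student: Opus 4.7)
The plan is to rephrase both ``$r \in \mathcal{U}$'' and ``$\overline{r} \in \mathcal{U}$'' in terms of the word $S$, and then match these combinatorial conditions against the two equivalent definitions of a Lyndon word, namely strong-less-than all proper nonempty suffixes and lexicographic minimality among all cyclic permutations. The orbit of $r$ is simply the sequence of shifts $g^k(r) = .\epsilon_{k+1}\dots\epsilon_m$ for $1 \le k < m$ (with $g^k(r) = 0$ for $k \ge m$), so $r \in \mathcal{U}$ is equivalent to $.\epsilon_{k+1}\dots\epsilon_m \ge .\epsilon_1\dots\epsilon_m$ for every $1 \le k < m$. The orbit of $\overline{r}$ cycles through the periodic expansions of the $m$ cyclic shifts of $S$, so $\overline{r} \in \mathcal{U}$ is equivalent to $S$ being lexicographically $\le$ each of its nontrivial cyclic shifts as a length-$m$ word.

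For the direction (2)~$\Rightarrow$~(1), I would treat $\overline{r}$ with the cyclic-shift formulation of Lyndon: every nontrivial cyclic permutation is strictly lex-greater than $S$, and this inequality passes directly to the corresponding periodic $d$-adic expansions. For $r$, I would use the suffix formulation: for each $1 \le k < m$, the hypothesis $S \ll \epsilon_{k+1}\dots\epsilon_m$ provides a position $j \le m-k$ at which $\epsilon_{k+j} > \epsilon_j$ following agreement on positions $1,\dots,j-1$, and a standard geometric-series estimate on the tails then yields
\[
.\epsilon_{k+1}\dots\epsilon_m \;-\; .\epsilon_1\dots\epsilon_m \;\ge\; \frac{1}{d^m} \;>\; 0.
\]

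For the converse (1)~$\Rightarrow$~(2), it actually suffices to use $r \in \mathcal{U}$ alone. Fix $1 \le k < m$ and compare the length-$m$ expansion $.\epsilon_1\dots\epsilon_m$ with the length-$(m-k)$ expansion $.\epsilon_{k+1}\dots\epsilon_m$. If $\epsilon_{k+1}\dots\epsilon_m$ were a prefix of $\epsilon_1\dots\epsilon_m$, the remaining nonzero digits of $r$ (nonzero because $\epsilon_m \neq 0$) would force $.\epsilon_{k+1}\dots\epsilon_m < .\epsilon_1\dots\epsilon_m$, contradicting the hypothesis. Otherwise the two expansions first disagree at some position $j \le m-k$; were the mismatch $\epsilon_j > \epsilon_{k+j}$, the standard lex/value correspondence would yield $.\epsilon_1\dots\epsilon_m > .\epsilon_{k+1}\dots\epsilon_m$, again contradicting $g^k(r) \ge r$. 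Hence $\epsilon_j < \epsilon_{k+j}$, which is exactly $S \ll \epsilon_{k+1}\dots\epsilon_m$; since this holds for every $k$, $S$ is Lyndon.

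The main technical hurdle is the dictionary between lexicographic order on words and numerical order on truncated $d$-adic expansions, particularly when the two expansions have different lengths or when one compares truncated to periodic expansions. The assumption $\epsilon_m \neq 0$ is precisely what rules out the degenerate coincidences and produces a clean equivalence; once this bookkeeping is settled, both directions reduce to short, elementary word-combinatorial arguments.
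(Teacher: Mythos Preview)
Your proposal is correct and follows essentially the same route as the paper's proof: both directions reduce to the dictionary between lexicographic order on words and numerical order on $d$-adic expansions, with $\epsilon_m \neq 0$ ruling out the degenerate prefix case in the direction (1)~$\Rightarrow$~(2), and both proofs use only $r \in \mathcal{U}$ for that direction. The paper's version is slightly terser---it writes $r = .\epsilon_1\dots\epsilon_m\overline{0}$ and $g^h(r) = .\epsilon_{h+1}\dots\epsilon_m\overline{0}$ as infinite expansions and reads off the inequalities directly from $S \ll \epsilon_{h+1}\dots\epsilon_m$, rather than via your explicit geometric-series estimate---but the substance is identical.
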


\begin{proof}
If $S$ is Lyndon, then for each $h \in \{1, \dots, m-1\}$ we have 
$$g^h(r) = .\epsilon_{h+1}\dots\epsilon_m \overline{0} > .\epsilon_1\dots \epsilon_m \overline{0}  = r$$
and similarly $g^h(\overline{r}) > \overline{r}$, thus the endpoints of $I_r$ belong to $\mathcal{U}$. Conversely, if $r \in \mathcal{U}$ then for each $h \in \{1, \dots, m-1\}$
$$ .\epsilon_{1}\dots\epsilon_m \overline{0} = r \leq g^h(r) = .\epsilon_{h+1}\dots\epsilon_m\overline{0}$$
hence $\epsilon_{1}\dots\epsilon_m \ll \epsilon_{h+1}\dots\epsilon_m$ unless $\epsilon_{h+1}\dots\epsilon_m$ is a prefix of $\epsilon_{1}\dots\epsilon_m$ 
and $\epsilon_{m-h+1}\dots\epsilon_m$ is all zeros, which is not possible since $\epsilon_m \neq 0$ by hypothesis.
\end{proof}

\begin{proof}[Proof of Proposition \ref{P:complement}]
It is easy to prove that the complement of $\mathcal{U}$ is open; namely, if $t \notin \mathcal{U}$, then 
by Lemma \ref{L:bif} (1) there exists $k \geq 0$ such that $g^k(t) \in (0, t)$, and such condition is open in $t$.
From Lemma \ref{L:com2}, Lemma \ref{L:dom} and Lemma \ref{L:com1} respectively
we have the chain of inclusions
$$[0, 1] \setminus \mathcal{U} \subseteq \bigcup_{\partial I_r \subseteq \mathcal{U}} I_r
\subseteq 
\bigcup_{r \in \mathbb{Q}_{(d)}} I_r \subseteq [0, 1] \setminus \mathcal{U}$$
thus equality must hold. Note also that two intervals $I_r$ whose endpoints lie in $\mathcal{U}$ may not overlap, 
hence their union must be disjoint. Moreover, by Lemma \ref{L:dom} the set of rationals $r$ for which $\partial I_r \subseteq \mathcal{U}$
coincides with the set $\mathbb{Q}_{Lyn}$ of Lyndon rationals, hence we get 
$$[0, 1] \setminus \mathcal{U} = \bigsqcup_{r\in \mathbb{Q}_{Lyn}} I_r
=
\bigcup_{r \in \mathbb{Q}_{(d)}} I_r.$$
As a consequence, the complement of $\mathcal{U}$ contains a left neighborhood of any  $d$-rational,
hence $\mathcal{U}$ has no interior.
\end{proof}


\section*{Structure and dimension of K(t)}

In this chapter we show that the set $K(t)$ has a countable Markov partition which we can easily describe, 
and can be used to compute the Hausdorff dimension of $K(t)$, thus giving an alternative proof of 
Urba{\'n}ski's formula (\cite{Urb}, pg. 305).
 
In order to state the result precisely, note that each real number $t \in (0, 1]$ admits exactly one expansion $t = .\epsilon_1\epsilon_2\dots$ 
in base $d$ such that the sequence $(\epsilon_n)_{n \in \mathbb{N}}$ is not eventually $0$. We shall call such expansion 
the \emph{non-degenerate} expansion of $t$. 

\begin{proposition} \label{T:main}
Let $d \geq 2$, and $t \in \mathcal{U}$ be a bifurcation parameter with non-degenerate base-$d$ expansion 
$t = .\epsilon_1\epsilon_2\dots$. 
Then $\eta = \eta(t)$ the Hausdorff dimension of $K(t)$ is given by 
$$\eta = -\frac{\log \lambda}{\log d}$$
 where $\lambda$ is a root of the equation 
\begin{equation}\label{uan2}
 P_t(\lambda) = 1
\end{equation}
and $P_t(X)$ is the power series
$$P_t(X) := \sum_{k = 1}^\infty (d-1-\epsilon_k)X^k.$$
\end{proposition}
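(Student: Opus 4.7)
My plan is to construct an explicit countable iterated function system whose attractor coincides with $K(t)$ up to a countable set, and then appeal to the classical Moran dimension formula. Write $t = .\epsilon_1\epsilon_2\dots$ in its non-degenerate base-$d$ expansion. For each $n\geq 1$ and each digit $a\in\{\epsilon_n+1,\dots,d-1\}$, set
$$\phi_{n,a}(y) := (\epsilon_1\dots\epsilon_{n-1}a) \cdot y,$$
using the notation introduced just before the statement. Each $\phi_{n,a}$ is affine with contraction ratio $d^{-n}$, and its image is an interval of length $d^{-n}$ consisting of points whose base-$d$ expansion begins with $\epsilon_1\dots\epsilon_{n-1}a$. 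These images are pairwise disjoint, so the open set condition will be automatic. The main structural claim I want to establish is
\begin{equation*}
K(t) \;=\; \{0,t\} \;\sqcup\; \bigsqcup_{n\geq 1} \bigsqcup_{a=\epsilon_n+1}^{d-1} \phi_{n,a}(K(t)).
\end{equation*}

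The bulk of the work is in verifying this decomposition. Given $y \in K(t)$, I must check that $x:=\phi_{n,a}(y)$ still avoids the hole $(0,t)$ under every iterate of $g$. For $k\geq n$ this reduces to $g^{k-n}(y)\notin(0,t)$, which holds by assumption on $y$. For $k=0$ it is immediate, since $x$ starts with the prefix $\epsilon_1\dots\epsilon_{n-1}a$ with $a>\epsilon_n$, forcing $x>t$. The delicate case is $k\in\{1,\dots,n-1\}$: here one compares the expansion $\epsilon_{k+1}\dots\epsilon_{n-1}\,a\,y_1y_2\dots$ of $g^k(x)$ coordinate by coordinate with that of $t$. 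The hypothesis $t\in\mathcal{U}$, which by Lemma \ref{L:bif} is equivalent to $g^k(t)\geq t$ for every $k$, ensures that either a strict inequality in the correct direction appears before position $n-k$, or else all coordinates agree up to position $n-k-1$ and the $(n-k)$-th coordinate of $g^k(x)$ equals $a > \epsilon_n \geq \epsilon_{n-k}$, the last inequality again coming from $g^k(t)\geq t$. The reverse inclusion is straightforward: for $x\in K(t)\setminus\{0,t\}$ take $n$ to be the first index at which the expansion of $x$ differs from that of $t$; necessarily $a_n>\epsilon_n$, and $g^n(x)\in K(t)$ is automatic.

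With the decomposition in hand, $K(t)$ is the attractor of a countable conformal IFS with $d-1-\epsilon_n$ maps of ratio $d^{-n}$ at each level $n$, satisfying the open set condition. Moran's formula then identifies $\eta(t)$ with the unique exponent $\eta$ for which
$$\sum_{n=1}^\infty (d-1-\epsilon_n)\,(d^{-n})^{\eta} \;=\; 1,$$
and the substitution $\lambda := d^{-\eta}$ converts this into $P_t(\lambda)=1$ with $\eta = -\log\lambda/\log d$, as asserted. The principal technical obstacle is the intermediate-$k$ verification above; it depends essentially on $t\in\mathcal{U}$, and this is precisely why the formula fails at stable parameters: for such $t$ some prefix $\epsilon_1\dots\epsilon_{n-1}a$ would itself have an iterate falling into $(0,t)$, destroying the invariance needed for the IFS.
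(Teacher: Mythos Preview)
Your approach is essentially identical to the paper's: it too establishes the decomposition $K(t)=\{t\}\cup\bigsqcup_{S\in\Sigma(t)} S\cdot K(t)$ (Proposition~\ref{P:struc}) and then applies the Moran formula to the resulting countable similarity IFS. The only differences are cosmetic: for the intermediate iterates the paper uses the one-line estimate $g^h(x)\geq g^h(t)\geq t$ in place of your direct digit comparison, and before invoking Moran it passes explicitly to the co-countable subset $\widetilde{K}(t):=\{x\in K(t): g^k(x)\neq t\ \forall k\}$, which satisfies the exact self-similarity relation $\widetilde{K}(t)=\bigsqcup_S S\cdot\widetilde{K}(t)$ and hence is literally the IFS attractor---a small tightening you should incorporate.
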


Note that the series $P_t(X)$ always converges inside the unit disk and, by the intermediate value theorem, equation
\eqref{uan2} has exactly one root in the interval $(0, 1]$. Whenever $t$ has a purely periodic expansion of period $p$, the series $P_t(X)$ becomes a rational function and $\lambda = d^{-\eta(t)}$ is the root of a polynomial 
of degree $p$. 

As an example, in the case $d = 2$, if $t = .\overline{001} = 1/7$, then 
$$P_t(X) = X + X^2 + X^4 + X^5 + \dots = \frac{X + X^2}{1-X^3}$$
so $\lambda = 2^{-\eta(t)}$ is a root of $P_t(\lambda) = 1$, i.e. satisfies $\lambda^3+\lambda^2+\lambda-1 = 0$.

Let $t \in (0, 1]$, and $t = .\epsilon_1\epsilon_2\dots$ be 
its non-degenerate expansion in base $d$. For each $k \geq 1$ and $s \in \mathcal{A}$, define the word
$$S_{k,s}(t) := \epsilon_1\dots \epsilon_{k-1}s$$
and consider the set of words  
$$\Sigma(t) := \{ S_{k,s}(t)  \ : \ \epsilon_k <  s\}.$$
The following proposition characterizes precisely the elements 
which belong to $K(t)$ in terms of the set $\Sigma(t)$.

\begin{proposition}\label{P:struc}

Let $t \in \mathcal{U}$ be a bifurcation parameter. Then we have the identity
\begin{equation} \label{E:struc}
K(t) = \{t\} \cup \bigsqcup_{S \in \Sigma(t)} S \cdot K(t).
\end{equation}
That is, an element belongs to $K(t)$ if and only if its (non-degenerate) 
expansion in base $d$ is a concatenation of words in $\Sigma(t)$.
\end{proposition}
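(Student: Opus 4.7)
The plan is to verify the set identity~\eqref{E:struc} by establishing the two inclusions together with pairwise disjointness of the pieces, after which the ``concatenation of $\Sigma(t)$-words'' characterization follows by iterating the construction. The crucial input throughout is Lemma~\ref{L:bif}: since $t \in \mathcal{U}$ we have both $t \in K(t)$ and the lexicographic inequality $g^j(t) \geq t$ for every $j \geq 0$.

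For the inclusion of the right-hand side in $K(t)$, it suffices to show $S \cdot K(t) \subseteq K(t)$ for each $S = \epsilon_1 \dots \epsilon_{k-1} s \in \Sigma(t)$. Fix $y \in K(t)$ and set $x := S \cdot y$; for $j \geq k$ the iterate $g^j(x) = g^{j-k}(y)$ already lies in $K(t)$, so nothing to check. For $0 \leq j < k$, I compare the base-$d$ expansions of $g^j(x)$ and $g^j(t)$: both begin with the digits $\epsilon_{j+1}, \dots, \epsilon_{k-1}$, while at position $k-j$ the former reads $s$ and the latter reads $\epsilon_k$. Since $s > \epsilon_k$ by definition of $\Sigma(t)$, this gives $g^j(x) > g^j(t)$, and the inequality $g^j(t) \geq t$ from Lemma~\ref{L:bif} upgrades this to $g^j(x) > t$; in particular $g^j(x) \notin (0,t)$.

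Conversely, take $x \in K(t) \setminus \{t\}$ with non-degenerate expansion $x = .a_1 a_2 \dots$. Since $x \in K(t)$ forces $x \geq t$, the smallest index $k$ at which the expansions of $x$ and $t$ disagree must satisfy $a_k > \epsilon_k$; hence $S := \epsilon_1 \dots \epsilon_{k-1} a_k$ belongs to $\Sigma(t)$, and $y := g^k(x)$ belongs to $K(t)$ by forward invariance, yielding $x = S \cdot y \in S \cdot K(t)$. Uniqueness of the non-degenerate expansion simultaneously shows that $t$ lies in none of the sets $S \cdot K(t)$ (every element of which deviates from $t$'s expansion at position $k$ with a larger digit) and that distinct $S, S' \in \Sigma(t)$ have disjoint images. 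Iterating the peeling $x \mapsto (S_1, x_1) \mapsto (S_2, x_2) \mapsto \dots$ produces the concatenation description. I expect the main technical obstacle to be precisely the digit-by-digit bookkeeping of the first step, which converts the combinatorial inequality $s > \epsilon_k$ (built into $\Sigma(t)$) into the dynamical inequality $g^j(x) > t$ via $g^j(t) \geq t$; the minor boundary issue of the point $0 = 1 \in \mathbb{S}$ (handled via its non-degenerate expansion $.\overline{d-1}$) and the fact that $t$ itself is not a $\Sigma(t)$-concatenation fit cleanly into the exceptional singleton $\{t\}$ isolated on the right-hand side.
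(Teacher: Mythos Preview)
Your proof is correct and follows essentially the same approach as the paper: both directions are handled by comparing the base-$d$ expansions of $g^j(x)$ and $g^j(t)$ digit by digit, using $s>\epsilon_k$ from the definition of $\Sigma(t)$ together with $g^j(t)\geq t$ (which is exactly the content of $t\in\mathcal{U}$ via Lemma~\ref{L:bif}). Your additional remarks on disjointness and on iterating to obtain the concatenation description are a useful elaboration but do not change the argument.
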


\begin{proof}
Let $x \in K(t)$. Then by definition $x \in [t, 1]$, hence either $x = t$ or the expansion of $t$ starts with $S_{k,s}$, 
where $k$ is the first digit for which the expansions of $t$ and $x$ differ, and $s$ is the $k^{th}$ digit of $x$, 
which mst be larger than the $k^{th}$ digit of $t$. Hence, $x = S_{k,s} \cdot y$ with $y \in [0, 1]$, 
and since $K(t)$ is forward invariant then also $y = g^k(x)$ belongs to $K(t)$, so $x \in S_{k,s} \cdot K(t)$.

Conversely, let $x = S \cdot y$ with $S \in \Sigma(t)$ and $y \in K(t)$. We have to prove that 
$g^h(x) \in [t, 1]$  for each $h \geq 0$. 
If $h \geq k$, then $g^h(x) = g^{h-k}(y) \in [t, 1]$ and the claim is proven.
On the other hand, fix $h \in \{0, \dots, k-1\}$ and compare the expansions of 
$g^h(x)$ and $g^h(t)$.
Since the expansion of $g^h(x)$ begins with $.\epsilon_{h+1}\dots\epsilon_{k-1} s$ and 
the expansion of $g^h(t)$ begins with $.\epsilon_{h+1}\dots\epsilon_{k-1} \epsilon_{k}$ and $s > \epsilon_k$, then 
we have 
$$g^h(x) \in [g^h(t), 1] \subseteq [t, 1]$$
where in the last inequality we used that $t$ belongs to $\mathcal{U}$, and the claim is proven.
\end{proof}

\begin{proof}[Proof of Proposition \ref{T:main}]
Consider the set $\widetilde{K}(t) := \{ x \in K(t) \ : \ g^k(x) \neq t \quad \forall k \geq 0 \}$.
Since $K(t)$ and $\widetilde{K}(t)$ differ by a countable set of preimages of $t$, their Hausdorff dimension is the same; 
moreover, by Proposition \ref{P:struc} we have 
$$\widetilde{K}(t) = \bigsqcup_{S \in \Sigma(t)} S \cdot \widetilde{K}(t).$$
The set $\widetilde{K}(t)$ is thus the attractor of a countable iterated function system; 
each map $x \mapsto S_{k,s} \cdot x$ is an affine map of derivative $d^{-k}$, and 
moreover, by construction all the images $S_{k,s} \cdot \widetilde{K}(t)$ are disjoint and satisfy the open set condition (\cite{Hut}, \cite{Moran}), hence the Hausdorff dimension 
$\eta$ of $\widetilde{K}(t)$ is determined implicitely 
by the formula 
$$1 = \sum_{S_{k,s} \in \Sigma(t)} d^{-k \eta}$$ 
which, since by definition of $\Sigma(t)$ for each $k$ there are $d-1-\epsilon_k$ values of $s$, can also be written as 
$$1 = \sum_{k = 1}^{\infty} (d-1-\epsilon_k) d^{-k \eta}$$
thus taking $X = d^{-\eta}$ yields the claim. 
\end{proof}

\begin{remark} \label{vedi-dopo}
Note that the hypothesis $t \in \mathcal{U}$ in Proposition \ref{T:main} is essential. Actually, one can 
define for any $t \in [0,1]$ the function $\zeta(t) = -\log \lambda /\log d$, where $\lambda$ is the unique real, positive root 
of the equation $P_t(X) = 1$. Then the function $\zeta(t)$ is no longer continuous, but 
for any $t \in [0,1]$ one has the relation (see Figure \ref{F:zeta})
$$\eta(t) = \min\{ \zeta(s), \ 0 \leq s \leq t \}.$$ 
\end{remark}

\begin{figure} 
\fbox{\includegraphics{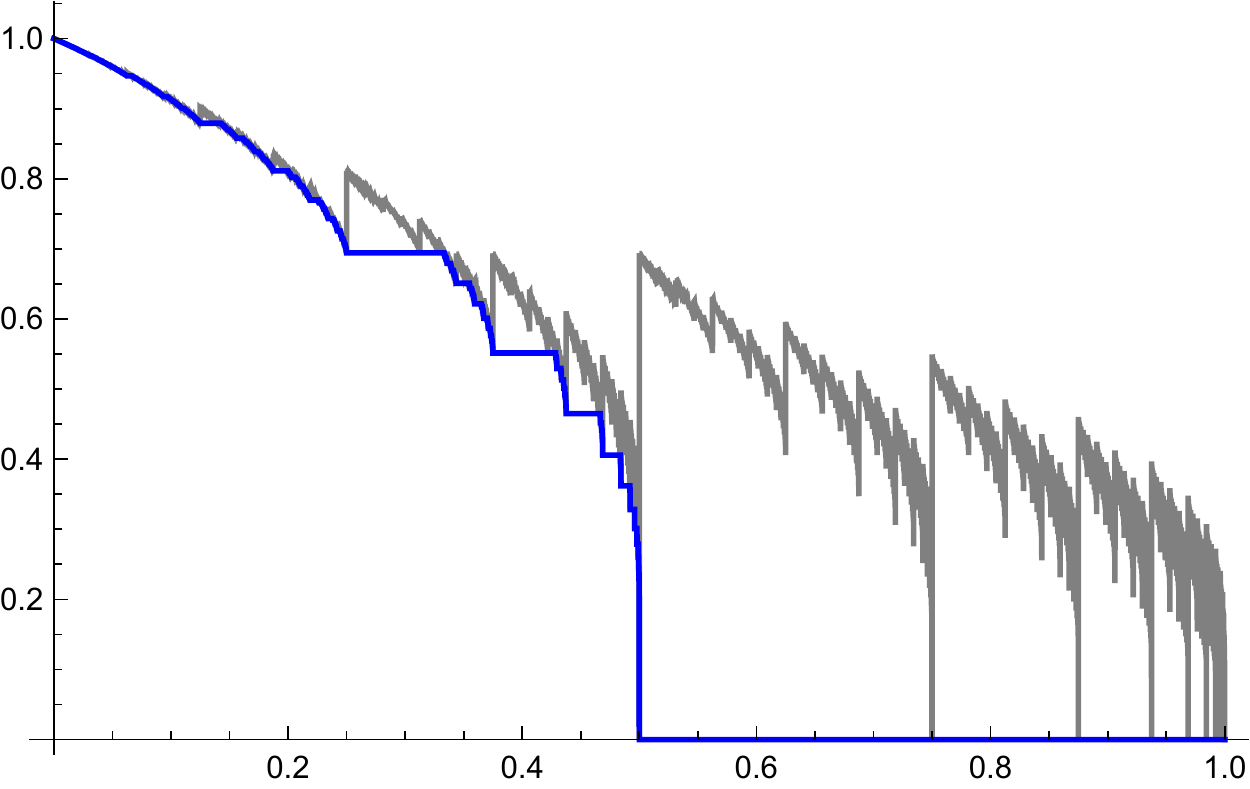}}
\caption{The functions $\zeta(t)$ and $\eta(t)$ for $d = 2$.} 
\label{F:zeta}
\end{figure}

\section*{The local H\"older exponent} \label{S:lhe}

Let us recall that a function $f : I \to \mathbb{R}$ on an interval $I$ is called \emph{H\"older continuous} of exponent $\alpha$
if there exists a constant $C > 0$ such that for each $x, y \in I$ one has 
$$|f(x) - f(y)| \leq C |x-y|^\alpha.$$
Given $t \in I$, 
we define the \emph{local H\"older exponent of $f$ at $t$} to be 
$$\alpha(f, t) := \liminf_{t' \to t} \frac{\log |f(t)-f(t')|}{\log |t-t'|}.$$

The goal of this section is to establish Theorem \ref{T:local_h}, namely that 
$$\alpha(\eta, t) = \eta(t)$$ 
for any $t \in \mathcal{U}$: let us start with some preliminary remarks.

Let us first note that for $t \in (\frac{d-1}{d}, 1]$ 
the set $K(t) = \{0\}$ is one point so $\eta(t) = 0$ and there is nothing to prove, so we shall assume $t \in [0, \frac{d-1}{d}]$.
In this case, let us note that $P_t(X) = sX + \sum_{k = 2}^\infty (d-1-\epsilon_k) X^k$ with $s \geq 1$,
hence the function $P_t(x)$ is strictly increasing on $[0, 1)$. Moreover, $P_t(0) = 0$ and $\lim_{x \to 1^-} P_t(x) > 1$ 
unless $t = \frac{d-1}{d}$ (in which case $P_t(x)$ is a polynomial), thus for each $t \in [0, \frac{d-1}{d}]$, 
the equation $P_t(x) =1$ has a unique solution $\lambda \in (0, 1]$, 
which we will denote $\lambda(t)$.
Note also that for each $x \in (0, 1)$ we have 
\begin{equation}\label{E:der}
1 \leq P'_t(x) \leq \frac{d}{(1-x)^2}
\end{equation}
hence $\lambda(t)$ is always a simple root of $P_t(X)$.

If $t \in [0,1]$, we shall denote $\epsilon_k(t)$ the $k^{th}$ digit of the non-degenerate expansion of $t$.
Moreover, if $t_1, t_2 \in [0,1]$, let us define $m(t_1, t_2)$ to be the length of the longest common prefix in the expansions of $t_1$ and $t_2$; 
namely, 
$$m(t_1, t_2) := \sup \{ k \geq 0 \ : \ \epsilon_h(t_1) = \epsilon_h(t_2) \quad \forall h \in \{ 1, \dots, k \} \}.$$

\begin{lemma}\label{L:gap}
For each $t_0 > 0$, there exists a constant $C_1 > 0$ such that for each $t_1, t_2 \in \mathcal{U} \cap [t_0, 1]$
one has 
\begin{equation}\label{E:step2}
C_1 d^{-m(t_1, t_2)} \leq |t_1 - t_2| \leq d^{-m(t_1, t_2)}.
\end{equation}
\end{lemma}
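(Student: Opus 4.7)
The estimate splits cleanly into two halves, with the natural constant $C_1 = t_0/d$.

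The upper bound is essentially by definition: since the non-degenerate base-$d$ expansions of $t_1$ and $t_2$ agree in the first $m := m(t_1, t_2)$ digits, both numbers lie in a common half-open interval of length $d^{-m}$, so $|t_1 - t_2| \leq d^{-m}$. Note that neither $\mathcal{U}$ nor $t_0$ plays any role here.

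For the lower bound I would assume without loss of generality that $t_1 < t_2$, factor out the common prefix together with one further digit, and write
\[
 t_2 - t_1 \;=\; d^{-m}\bigl(g^m(t_2) - g^m(t_1)\bigr) \;=\; d^{-m-1}\bigl[(\epsilon_{m+1}(t_2) - \epsilon_{m+1}(t_1)) + (g^{m+1}(t_2) - g^{m+1}(t_1))\bigr],
\]
where the digit difference is $\geq 1$ by the definition of $m$, and $g^{m+1}(t_1) \leq 1$ is automatic. The entire content of the lemma thus reduces to producing a uniform lower bound $g^{m+1}(t_2) \geq t_0$, and this is exactly where the hypothesis $t_2 \in \mathcal{U}$ is used. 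The bifurcation condition $g^{m+1}(t_2) \notin (0, t_2)$ forces $g^{m+1}(t_2) \geq t_2 \geq t_0$ whenever $g^{m+1}(t_2)$ is nonzero in $\mathbb{R}/\mathbb{Z}$; in the borderline case, where $t_2$ is a $d$-rational whose non-degenerate tail from position $m+2$ is identically $(d-1)(d-1)\dots$, the real number produced by shifting the non-degenerate expansion equals $1 \geq t_0$. Either way $g^{m+1}(t_2) \geq t_0$, so the bracketed quantity above is at least $1 + t_0 - 1 = t_0$, and hence $t_2 - t_1 \geq (t_0/d)\,d^{-m}$, giving the claim with $C_1 = t_0/d$.

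The main---indeed the only---mildly delicate point is the short case-check dealing with the identification $1 \sim 0$ in $\mathbb{R}/\mathbb{Z}$ when translating the bifurcation condition onto the real shift of the non-degenerate expansion; once this is sorted out, the estimate is a one-line computation and does not require Proposition \ref{T:main} or the Lyndon characterization of $\mathcal{U}$. Notice also that only the hypothesis $t_2 \in \mathcal{U}$ (on the larger parameter) is actually used in the argument.
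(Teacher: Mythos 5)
Your proof is correct and is essentially the paper's argument written out digit by digit: the paper gets the lower bound by observing that $g^m(t_1), g^m(t_2)$ lie in $K(t_0) \subseteq g^{-1}([t_0,1]) = \bigcup_{k} \left[ \tfrac{t_0+k}{d}, \tfrac{1+k}{d} \right]$ and, having distinct leading digits, sit in distinct intervals of that union, hence are separated by a gap of length $t_0/d$ --- which is exactly your combination of $\epsilon_{m+1}(t_2)-\epsilon_{m+1}(t_1) \geq 1$, $g^{m+1}(t_2) \geq t_0$ and $g^{m+1}(t_1) \leq 1$, including the same constant $C_1 = t_0/d$. Your remarks that only the larger parameter needs to lie in $\mathcal{U}$ and that the $0 \sim 1$ identification must be handled for the all-$(d-1)$ tail are correct minor refinements of what the paper does.
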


\begin{proof}
Let $m := m(t_1, t_2)$; the upper bound is given by $|t_1 - t_2| = d^{-m} |g^m(t_1) - g^m(t_2)| \leq d^{-m}$. To get the lower bound, 
first note that, since $K(t_0) \subseteq [t_0, 1]$ and $K(t_0)$ is forward invariant by $g$, we have 
$$K(t_0) \subseteq g^{-1}(K(t_0)) \subseteq g^{-1}([t_0, 1]) = \bigcup_{k=0}^{d-1} I_k$$
where  $I_k := \left[ \frac{t_0+ k}{d}, \frac{1+k}{d} \right]$. Now, by definition of $m$, the two points $u_1 := g^m(t_1)$ and $u_2 := g^m(t_2)$ 
belong to two different intervals $I_k$, thus 
$|t_1 - t_2| = d^{-m} |u_1 - u_2| \geq \frac{t_0}{d}$, which gives the lower bound with $C_1 := \frac{t_0}{d}$.
\end{proof}

We are now ready to prove the main theorem stated in the introduction.

\begin{proof}[Proof of Theorem \ref{T:local_h}]
Monotonicity of $\eta(t)$ is immediate from the definition, while continuity follows from Rouch\'e's theorem. Indeed, let $t \in \mathcal{U} \cap [0, \frac{d-1}{d}]$ and suppose $\lambda = \lambda(t) < 1$. 
Then, $\lambda(t)$ is a simple root of $P_t(X)$, and  $P_{t'}(X)$ converges uniformly on compact sets to $P_t(X)$ as $t' \to t$,
hence the root $\lambda(t')$ converges to $\lambda(t)$.
Suppose now $\lambda(t) = 1$, which implies $t = \frac{d-1}{d}$. Then
$P_t(X) - 1 = X - 1$ has no roots in any strip $\{ x + iy \ : \ 0 \leq x \leq 1-\delta, |y| \leq \delta \}$ 
for $0 < \delta < 1$, hence $\lambda(t') \to 1$ as $t' \to t$. Since $\eta(t)  = 
-\frac{\log \lambda(t)}{\log d}$, then continuity of $\lambda(t)$ implies continuity of $\eta(t)$.

Let us now estimate the modulus of continuity of $\eta(t)$. First note that, since $\eta(t) = -\frac{\log \lambda(t)}{\log d}$ and 
the function $h(x) := -\frac{\log x}{\log d}$ is bi-Lipschitz on $[1/d, 1]$, it is equivalent to prove the claim for $\lambda(t)$.
Let $t_1, t_2 \in \mathcal{U} \cap [0, \frac{d-1}{d}]$, and 
to simplify notation, we denote $\lambda_1 := \lambda(t_1)$, 
$\lambda_2 := \lambda(t_2)$, and also $P_1(X) := P_{t_1}(X)$, $P_2(X) := P_{t_2}(X)$
and suppose $\lambda_1, \lambda_2 < 1$.
 
Now, using that $P_2(\lambda_2) = P_1(\lambda_1) = 1$ and applying Lagrange's theorem we have
\begin{equation}\label{E:step1}
 P_1(\lambda_1) - P_2(\lambda_1) = P_2(\lambda_2) - P_2(\lambda_1) = P_2'(\xi) (\lambda_2 - \lambda_1)
\end{equation}
for some $\xi \in [\lambda_1, \lambda_2]$. 
On the other hand, by writing out the power series we get
\begin{equation}\label{E:step3}
P_1(\lambda_1) - P_2(\lambda_1) = \lambda_1^m R(t_1, t_2)
\end{equation}
 where $R(t_1, t_2) := 
 (\epsilon_m(t_2) - \epsilon_m(t_1)) + \sum_{j = m+1}^\infty (\epsilon_j(t_2) - \epsilon_j(t_1)) \lambda_1^{j-m}$
 and $m = m(t_1, t_2)$.
By comparing the two previous equations we get
$$|\lambda_1 - \lambda_2| =  \lambda_1^m \frac{|R(t_1, t_2)|}{P_2'(\xi)} \leq \lambda_1^m \frac{d}{1-\lambda_1}$$
hence by combining it with the upper bound for $|t_1-t_2|$ given by eq. \eqref{E:step2} we have the following upper bound for the modulus 
of continuity: for each $t \in \mathcal{U} \cap (0, \frac{d-1}{d})$, there exists $C_2 >0$ such that 
one has
$$|\lambda_1 - \lambda_2| \leq C_2 |t_1 -t_2|^{\frac{-\log \lambda_1}{\log d}} = C_2 |t_1 -t_2|^{\eta(t_1)}$$
for each $t_1, t_2 \in \mathcal{U}$ sufficiently close to $t$. 
Since $\lambda(t)$ is constant on the complement of $\mathcal{U}$, the above upper bound 
actually works for \emph{any} $t_1, t_2$ close to $t$, thus proving
$$\alpha(\eta, t) \geq \eta(t)$$ 
for each $t \in \mathcal{U}$.

For the lower bound, let us pick $t\in \mathcal{U}$.
Now, by Lemma \ref{L:approx} 
there exists a sequence $t_n \to t$ with $t_n \neq t$ such that either for each $k$ and each $n$ we have 
$\epsilon_k(t) \leq \epsilon_k(t_n)$, or we have the reverse inequality for each $k$ and each $n$; in both cases, 
 $R(t, t_n)$ is a power series in $\lambda(t)$ all of whose coefficients are integers and have the same sign, hence
$|R(t, t_n)| \geq 1$ and
$$|\lambda(t) - \lambda(t_n)| =  \lambda(t)^m \frac{|R(t, t_n)|}{P_2'(\xi)} \geq \lambda(t)^m \cdot \frac{(1-\xi)^2}{d} \geq C_3 |t-t_n|^{\eta(t)}$$
where $C_3 := \frac{(1-\lambda(t_1))^2}{d}$, 
proving the lower bound $\alpha(\eta, t) \leq \eta(t)$.
\end{proof}

\begin{lemma} \label{L:approx}
Let $t \in \mathcal{U}$. If $t$ is not a $d$-rational, then there exists a sequence $(t_n)$ of elements of $\mathcal{U}$
such that $t_n \to t$, $t_n > t$ for any $n$, and 
$$\epsilon_k(t) \leq \epsilon_k(t_n) \qquad \forall k,n.$$
If $t$ is a $d$-rational, then there exists a sequence $(t_n)$ of elements of $\mathcal{U}$
such that $t_n \to t$, $t_n < t$ for any $n$, and 
$$\epsilon_k(t_n) \leq \epsilon_k(t) \qquad \forall k,n.$$
\end{lemma}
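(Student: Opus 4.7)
My plan is to construct both approximating sequences as Lyndon $d$-rationals, which lie automatically in $\mathcal{U}$ by Lemma \ref{L:dom}. The two cases of the lemma admit dual ``prefix-modification'' constructions.

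For $t$ not a $d$-rational, write $t = .\epsilon_1\epsilon_2\ldots$. Since the expansion is neither eventually $0$ nor eventually $d-1$, infinitely many indices $n$ satisfy $\epsilon_n < d-1$. For each such $n$ I would set
$$t_n := .\epsilon_1 \ldots \epsilon_{n-1}(\epsilon_n + 1),$$
so that $t < t_n \leq t + d^{-(n-1)}$, and the non-degenerate expansion of $t_n$ --- which is $\epsilon_1, \ldots, \epsilon_{n-1}, \epsilon_n, (d-1), (d-1), \ldots$ --- dominates the expansion of $t$ digit by digit. The key step is to check that $W_n := \epsilon_1\ldots\epsilon_{n-1}(\epsilon_n+1)$ is a Lyndon word, since Lemma \ref{L:dom} will then place $t_n \in \mathcal{U}$. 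I would verify Lyndonness by a short case analysis: for each $h \in \{2,\ldots,n\}$, the inclusion $t \in \mathcal{U}$ gives $g^{h-1}(t) \geq t$ (with strict inequality because $t$ is not $d$-rational), so there is a first index $j^*$ at which $\epsilon_{j^*} < \epsilon_{h+j^*-1}$; splitting into the three cases $j^* < n-h+1$, $j^* = n-h+1$, and $j^* > n-h+1$, one directly verifies $W_n \ll Y$ for the proper suffix $Y$ of $W_n$ starting at position $h$.

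For $t$ a $d$-rational, Lemma \ref{L:dom} writes $t = .\epsilon_1\ldots\epsilon_m$ with $S := \epsilon_1\ldots\epsilon_m$ Lyndon. I would set
$$t_n := t - d^{-(m+n)},$$
whose terminating expansion is $\epsilon_1\ldots\epsilon_{m-1}(\epsilon_m-1)(d-1)^n$ and whose non-degenerate expansion is $\epsilon_1,\ldots,\epsilon_{m-1},(\epsilon_m-1),(d-1)^{n-1},(d-2),(d-1),(d-1),\ldots$. This is dominated digit by digit by the non-degenerate expansion $\epsilon_1,\ldots,\epsilon_{m-1},(\epsilon_m-1),\overline{d-1}$ of $t$, while $t_n < t$ and $t_n \to t$ are obvious. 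Membership $t_n \in \mathcal{U}$ again reduces via Lemma \ref{L:dom} to proving that $W_n := \epsilon_1\ldots\epsilon_{m-1}(\epsilon_m-1)(d-1)^n$ is Lyndon. Proper suffixes of $W_n$ that start inside the $(d-1)$-tail are handled instantly by $\epsilon_1 < d-1$ (which is forced by $S$ being Lyndon of length $\geq 2$); suffixes starting inside the $\epsilon$-prefix I would handle by importing the Lyndon-disagreement index of $S$ and tracking the resulting cascade.

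The main obstacle will be precisely this last verification, in the subcase where the Lyndon-disagreement index $j^*(h)$ for $S$ sits at the very end of the overlap and $\epsilon_{m-h+1}$ happens to equal $\epsilon_m - 1$. In that branch the first genuine mismatch between $W_n$ and its suffix is forced deep into the $(d-1)$-tail, and I expect one needs $n \geq m - 1$ for the tail to reach the image of position $m$ in the shifted word, where the inequality $\epsilon_m - 1 < d-1$ finally breaks the tie. This is what ultimately forces $n \to \infty$ and explains why a single $d$-rational approximant from below does not suffice. Once this Lyndon step is in place, all remaining items (distance estimate, digit-domination, and application of Lemma \ref{L:dom}) are routine.
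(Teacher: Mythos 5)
Your construction is essentially the paper's. In the non-$d$-rational case your $t_n = .\epsilon_1\ldots\epsilon_{n-1}(\epsilon_n+1)$ is the very same number as the paper's $t_n = .\epsilon_1\ldots\epsilon_n(d-1)^\infty$ (restricted to the indices with $\epsilon_n < d-1$, which is harmless since there are infinitely many), and your verification of $t_n \in \mathcal{U}$ via Lyndonness of $W_n$ and Lemma \ref{L:dom} is a repackaging of the paper's direct check that $g^r(t_n) \notin (0,t_n)$. One correction: the parenthetical ``with strict inequality because $t$ is not $d$-rational'' is false --- $t$ may be periodic without being $d$-rational (e.g.\ $t = .\overline{01} = 1/3$ for $d=2$ lies in $\mathcal{U}$ and satisfies $g^2(t)=t$), so the disagreement index $j^*$ need not exist. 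This does not break your argument: when $j^*$ does not exist one argues exactly as in your case $j^* > n-h+1$, since the suffix then agrees with $W_n$ up to its last letter, which equals $\epsilon_n+1 > \epsilon_{n-h+1} = \epsilon_n$.

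In the $d$-rational case you diverge mildly: the paper approximates from below by the periodic points $.\overline{\epsilon_1\ldots\epsilon_k(d-1)^n}$, whereas you use the $d$-rationals $.\epsilon_1\ldots\epsilon_{m-1}(\epsilon_m-1)(d-1)^n$. Your variant works, but --- as you correctly anticipate --- only for $n$ large: for $d=2$ and $t = .01011 = 11/32 \in \mathcal{U}$, taking $n=1$ gives $.010101 = 21/64$, whose word $(01)^3$ is not Lyndon, and indeed $g^2(21/64) = 20/64 \in (0,21/64)$, so $21/64 \notin \mathcal{U}$. Requiring $n \geq m-1$ (so that in your problematic subcase the $(d-1)$-tail of the suffix reaches position $m$ of $W_n$, where the letter $\epsilon_m-1 < d-1$ breaks the tie) repairs this, and since only a tail of the sequence is needed the lemma follows. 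The paper's periodic approximants avoid this restriction, but the difference is cosmetic.
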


\begin{proof}
Let $t \in \mathcal{U}$ not a $d$-rational, and let $t = .\epsilon_1\epsilon_2\dots$ be its (non-degenerate) expansion in base $d$. 
For each $n$, let us define 
$$t_n := .\epsilon_1\dots \epsilon_n (d-1)^\infty.$$ 
By construction $t_n > t$, $t_n \to t$, and $\epsilon_k(t_n) \geq \epsilon_k(t)$ 
for each $k$. We need to check that 
$t_n \in \mathcal{U}$. Let us consider $g^r(t_n)$ and compare it to $t_n$. 
If $r \geq n$, then $g^r(t_n) = 0 \notin (0, t_n)$ as required. If $r < n$ instead, then we have 
$g^r(t_n) = .\epsilon_{r+1}\dots \epsilon_n (d-1)^\infty$.
Since $t \in \mathcal{U}$, then $g^r(t) = .\epsilon_{r+1}\dots \epsilon_n \dots \geq t = .\epsilon_1\dots \epsilon_n \dots$, 
hence, if you set $S := \epsilon_1 \dots \epsilon_n$ and $S_0 := \epsilon_{r+1}\dots \epsilon_n$, either $S_0 \gg S$ or $S_0$ is a prefix of $S$. In the first case $g^r(t_n) \geq t_n$ as required; 
in the second case, $S_0 (d-1)^{n-r} \geq S$, so 
$g^r(t_n) = .S_0 (d-1)^\infty \geq .S (d-1)^\infty = t_n$, as required. 

Let us now deal with the case where $t$ is a $d$-rational, and let $t = .\epsilon_1 \dots \epsilon_k (d-1)^\infty$ be its non-degenerate 
expansion, which we can take so that $\epsilon_1 \neq d-1$ and $\epsilon_k \neq d-1$. We claim that the number $t_n$ with base-$d$ expansion
$$t_n = .\overline{\epsilon_1 \dots \epsilon_k (d-1)^n}$$
satisfies the claim. Clearly, $t_n < t$ and $t_n \to t$, while $\epsilon_k(t_n) \leq \epsilon_k(t)$ for any $k$. We need to prove that $t_n \in \mathcal{U}$.
Given $r$, consider $g^r(t_n)$: either the first digit of $g^r(t_n)$ is $(d-1)$, which implies $g^r(t_n) \notin (0, t_n)$ as $\epsilon_1 \neq d-1$, 
or $g^r(t_n)$ is of the form $g^r(t_n) = .\epsilon_{r+1} \dots \epsilon_k (d-1)^n \dots$. 
Then either $S_0 = \epsilon_{r+1} \dots \epsilon_k$ satisfies $S \ll S_0$, or $S_0$ is a prefix of $S$.
If $S_0$ is a prefix of $S$, then one can write $S = S_0 S_1$ where $S_1$ is some non-empty word, and either $S_1 \ll (d-1)^n$ or $S_1$ is of the form $(d-1)^a$ for some $a \geq 1$, which contradicts the fact that $\epsilon_k \neq d-1$.
\end{proof}

Note that an alternative way to define the local H\"older exponent of $f$ at $t$ is as
the supremum of all values $s$ for which 
$f$ is H\"older continuous of exponent $s$ on some neighborhood of $t$, i.e. as
$$\widetilde{\alpha}(f, t) := \sup \left\{ s \ : \  \lim_{\epsilon \to 0} \sup_{\stackrel{x, y \in B(t, \epsilon)}{x\neq y}} \frac{|f(x)-f(y)|}{|x-y|^s} < \infty \right\} $$
where $B(t, \epsilon)$ is the open ball of radius $\epsilon$ and center $t$.
Note that $\widetilde{\alpha}(f, t) > 0$ if and only if $f$ is locally H\"older continuous at $t$. 
While in general $\widetilde{\alpha}(f,t) \leq \alpha(f,t) $, the two quantities need not be the same; however, in our case the 
same argument as in the proof of 
Theorem \ref{T:local_h} shows that 
$$\widetilde{\alpha}(f,t) = \alpha(f,t) = \eta(t) \qquad \forall t \in \mathcal{U}.$$

Now we shall show that the function $\eta$ is not H\"older continuous at $t_*=1-1/d$ (which is the smallest $t$  such that $\eta(t)=0$). In fact the 
modulus of continuity of $\eta$ at $t_*$ is given by the function
$$\omega(x) :=\frac{\log\log(1/x)}{\log(1/x)}$$
as shown in the following Proposition.

\begin{proposition}\label{P:modulus}
We have the limit
$$\lim_{ t \to t^-_*} \frac{\eta(t)-\eta(t_*)}{\omega(t_*-t)}=1.$$
\end{proposition}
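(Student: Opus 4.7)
Plan: Since $\eta(t_*)=0$ where $t_*=(d-1)/d$, the claim amounts to proving $\lim_{t\to t_*^-}\eta(t)/\omega(t_*-t)=1$. My strategy is to first treat $t\in\mathcal{U}$ close to $t_*$ via Proposition \ref{T:main}, then extend to all $t\to t_*^-$ by the local constancy of $\eta$ on the stable set.

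For $t\in\mathcal{U}\cap[0,t_*)$, the non-degenerate expansion must satisfy $\epsilon_1(t)=d-2$; let $m=m(t)\ge 2$ be the smallest index with $\epsilon_m(t)<d-1$. I invoke the characterization of $\mathcal{U}$ via the shift condition $\sigma^k(\epsilon)\ge\epsilon$ for every $k\ge 0$. Applying it with $k=m-1$ and performing the lexicographic comparison digit-by-digit forces $\epsilon_m=d-2$ and then $\epsilon_{m+1}=\cdots=\epsilon_{2m-2}=d-1$. Consequently $P_t(X)=X+X^m Q_t(X)$, where $Q_t(X)\ge 1$ and the tail satisfies $Q_t(X)-1\le (d-1)X^{m-1}/(1-X)$.

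Writing $\mu:=1-\lambda(t)$, the equation $P_t(\lambda)=1$ reads $\lambda^m Q_t(\lambda)=\mu$, so $\lambda^{m-1}=\mu/(\lambda Q_t)$; substituting in the tail bound yields $\lambda Q_t(Q_t-1)\le d-1$, whence $Q_t$ is bounded by a constant $C_d$ (depending only on $d$) as soon as $\lambda\ge 1/2$. Taking logarithms of $\lambda^m Q_t=\mu$ and using $\log\lambda=-\mu+O(\mu^2)$ yields
\begin{equation*}
m\mu=\log(1/\mu)+O(1),
\end{equation*}
with asymptotic solution $\mu=(\log m/m)(1+o(1))$. Simultaneously $t_*-t=\sum_{k\ge m}(d-1-\epsilon_k)d^{-k}=d^{-m}Q_t(1/d)$, and the same tail estimate at $X=1/d$ gives $Q_t(1/d)=1+O(d^{-(m-2)})$, hence $L:=\log(1/(t_*-t))=m\log d+o(1)$. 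Combining,
\begin{equation*}
\omega(t_*-t)=\frac{\log L}{L}=\frac{\log m}{m\log d}(1+o(1))=\frac{\mu}{\log d}(1+o(1))=\eta(t)(1+o(1)).
\end{equation*}

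For general $t\to t_*^-$, Proposition \ref{P:complement} implies $\eta$ is constant on each component $I_r=(r,\bar r)\subset[0,1]\setminus\mathcal{U}$, so $\eta(t)=\eta(r)$ for $t\in I_r$; since $|I_r|$ is of order $d^{-m(r)}$, comparable to $t_*-r$, the modulus $\omega(t_*-t)$ differs from $\omega(t_*-r)$ by a factor of $1+o(1)$ as $r\to t_*$, and the limit extends. The main obstacle is the a-priori bound $Q_t\le C_d$: without leveraging the shift condition characterizing $\mathcal{U}$, $Q_t$ could in principle be as large as $(d-1)/(1-\lambda)$, and the ratio $\eta/\omega$ would only be pinned down within a multiplicative constant.
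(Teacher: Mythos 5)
Your argument is correct in substance, but it takes a genuinely different route from the paper. The paper's proof avoids analyzing general $t\in\mathcal{U}$ altogether: it evaluates $\eta$ only along the explicit sequence $t_n=t_*-d^{-n}$, for which $P_{t_n}(X)=X+X^n$ exactly, extracts $\lambda_n=1-\frac{\log n}{n}(1+o(1))$ by testing $P_{t_n}$ at the points $1-\alpha\frac{\log n}{n}$, and then passes to arbitrary $t\to t_*^-$ by squeezing between consecutive $t_n$ using the monotonicity of $\eta$ and of $\omega$ together with $\omega(t_*-t_n)/\omega(t_*-t_{n+1})\to 1$. You instead prove the stronger statement that the asymptotic $\lambda(t)=1-\frac{\log m}{m}(1+o(1))$ holds for \emph{every} $t\in\mathcal{U}$ near $t_*$, which requires the combinatorial input (the shift condition $\sigma^k(\epsilon)\ge\epsilon$ forcing $\epsilon_m=d-2$ and a block of $(d-1)$'s up to position $2m-2$) in order to bound the tail $Q_t$ by a constant; this is more work but gives uniform control on all bifurcation parameters, and your identification of the a priori bound $Q_t\le C_d$ as the crux is exactly right. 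The paper's sequence-plus-monotonicity trick buys simplicity by sidestepping that combinatorial analysis entirely.

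A few loose ends you should tighten, none fatal. First, $\epsilon_1(t)=d-2$ holds only for $t$ close to $t_*$, not on all of $\mathcal{U}\cap[0,t_*)$, and your $m$ should be the smallest index $\ge 2$ with $\epsilon_m<d-1$. Second, the step $\log\lambda=-\mu+O(\mu^2)$ and the hypothesis $\lambda\ge 1/2$ both presuppose $\mu=1-\lambda(t)\to 0$ as $t\to t_*^-$; you should either cite the continuity of $\eta$ at $t_*$ (established in the proof of Theorem \ref{T:local_h}) or note that $P_t(1-\tfrac{2\log m}{m})<1$ for large $m$, which forces $\mu\le\tfrac{2\log m}{m}$. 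Third, in the extension step the claim that $|I_r|$ is comparable to $t_*-r$ is not true in general (the length of the Lyndon word can be much larger than the position of the first digit where $r$ departs from $t_*$), but it is also not needed: for $t\in I_r=(r,\bar r)$ one has $\eta(t)=\eta(r)=\eta(\bar r)$ and $t_*-\bar r\le t_*-t\le t_*-r$, so $\frac{\eta(t)}{\omega(t_*-t)}$ is sandwiched between $\frac{\eta(r)}{\omega(t_*-r)}$ and $\frac{\eta(\bar r)}{\omega(t_*-\bar r)}$, both of which tend to $1$ by your $\mathcal{U}$-case since $r,\bar r\in\mathcal{U}$ and $r,\bar r\to t_*$.
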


\begin{proof}
To begin we shall give a precise estimate of $\eta(t_n)$ where $t_n:=t_*-1/d^n$. It is easy to check that $t_n\in \mathcal{U}$ for all $n\geq 2$ and $P_{t_n}(X)= X+X^n$. In order to locate the unique positive solution $\lambda_n$
of the equation $P_{t_n}(X)=1$ let us observe that for any fixed $\alpha > 0$ one has
$$P_{t_n}\left(1-\alpha \frac{\log n}{n}\right)= 1-\alpha \frac{\log n}{n} + \frac{1}{n^\alpha}\left[1+O\left(\frac{\log^2 n}{n}\right)\right] \qquad \textup{as }n \to \infty.$$
Therefore, using the above formula for $\alpha=1$ we get that there is $n_0$ such that
 $P_{t_n}(1-\frac{\log n}{n})
 <1 \ \forall n\geq n_0$. On the other hand, for any $\alpha<1$ there is $n_1=n_1(\alpha)$ such that 
$P_{t_n}(1-\alpha \frac{\log n}{n})
 >1 \ \forall n\geq n_1$.
This means that as $n \to \infty$ we have $\lambda_n=1-  \frac{\log n}{n}[1+o(1)]$ and $$\eta(t_n)=-\frac{\log \lambda_n}{\log d}=\frac{\log n}{n\log d}[1+o(1)].$$
Recalling that $\log(t_*-t_n) = - n \log d$ we see that the modulus of continuity of $\eta$ at $t_*$ cannot be smaller than $\omega$, indeed:
$$\lim_{n \to +\infty} \frac{\eta(t_n)-\eta(t_*)}{\omega(t_*-t_n)}=1$$
On the other hand,  if $t_n\leq t \leq t_{n+1}$ then, using the monotonicity of $\eta(t)$ and $\omega(t)$ and the fact that $\frac{\omega(t_*-t_n)}{\omega(t_*-t_{n+1})} \to 1$, we get 
$$\lim_{ t \to t^-_*} \frac{\eta(t)-\eta(t_*)}{\omega(t_*-t)}=1.$$
 
\end{proof}

\begin{remark}\label{final}
Using the characterization of $\mathcal{U}$ we get also an elementary proof of the following result of Urba{\'n}ski (\cite{Urb}, Theorem 2):
$$\textup{H.dim }K(t)=\textup{H.dim }(\mathcal{U}\cap [t,1])\qquad \forall t \in [0,1].$$
\end{remark}

Indeed, if we denote by $\mathcal{P} \subset \mathcal{U}$ the set of $d$-rational elements of $\mathcal{U}$, it is easy to check that $\eta(\mathcal{P})$ is dense in $[0,1]$.
Therefore, since both the function $\eta(t)$ and $\tilde{\eta}(t):=\textup{H.dim }(\mathcal{U}\cap [t,1])$ are strictly increasing, to prove our claim it is enough to check that the equality $\eta(t)=\tilde{\eta}(t)$ holds for all $t\in \mathcal{P}$.

The inequality $\eta(t)\geq\tilde{\eta}(t)$ is straightforward, so we only have to prove $\eta(t)\leq\tilde{\eta}(t)$. If $t_0 \in \mathcal{P}$ then $t_0=.w_0$, with $w_0$ Lyndon; now, if $t_1>t_0$ is in $\mathcal{U}$ then $t_1=.w_1$ with $w_1$ another (possibly infinite) Lyndon word, $w_1>w_0$, and we can define the set 
$$ S:=\{ s=.w_0w: \ \ .w\in K(t_1) \}.$$
It is easy to check that $S \subset \mathcal{U}\cap [t_0,1]$ so $\textup{H.dim }S \leq \textup{H.dim }(\mathcal{U}\cap [t_0,1]).$
Moreover, since $S$ is an affine copy of $ K(t_1)$ we have $\textup{H.dim }S=\textup{H.dim }K(t_1)$.

Thus we have proved that $\tilde{\eta}(t_0) \geq \eta(t_1)$ for all $t_1>t_0$, so our claim follows taking the limit for $t_1\to t_0$ and using the 
continuity of $\eta(t)$.

\end{document}